\newtheorem{theorem}{Theorem}[section]
\newtheorem{lemma}[theorem]{Lemma}
\newtheorem{definition}[theorem]{Definition}
\theoremstyle{definition}
\newtheorem{remark}[theorem]{Remark}
\numberwithin{equation}{section}
\begin{document}
\setcounter{page}{1}
\setcounter{firstpage}{1}
\setcounter{lastpage}{4}
\renewcommand{\currentvolume}{??}
\renewcommand{\currentyear}{??}
\renewcommand{\currentissue}{??}
\title[On Hilfer Fractional BVP with nonlocal boundary conditions]{Existence of solution for Hilfer fractional differential problem with nonlocal boundary condition}
\author[H.A. Wahash]{Hanan A. Wahash}
\address{``Department of Mathematics'',\\ Dr.Babasaheb Ambedkar Marathwada University,\\
Aurangabad - 431001, (M.S) India}
\email{hawahash86@gmail.com}
\author[M.S. Abdo]{Mohammed S. Abdo}
\address{``Department of Mathematics'',\\ Dr.Babasaheb Ambedkar Marathwada University,\\
Aurangabad - 431001, (M.S) India}
\email{msabdo1977@gmail.com}
\author[S.K. Panchal]{Satish K. Panchal}
\address{``Department of Mathematics'',\\ Dr.Babasaheb Ambedkar Marathwada University,\\
Aurangabad - 431001, (M.S) India}
\email{drpanchalskk@gmail.com}
\author[S.P. Bhairat$^*$]{Sandeep P. Bhairat\footnote{Corresponding author email: sp.bhairat@marj.ictmumbai.edu.in}}
\address{``Faculty of Engineering Mathematics'',\\ Institute of Chemical Technology Mumbai,\\
Marathwada Campus, Jalna - 431 203 (M.S) India.}
\email{sp.bhairat@marj.ictmumbai.edu.in}
\subjclass{34A08, 26A33, 34A12, 34A40}
\keywords{fractional differential equations, Hilfer fractional derivatives, Existence, Fixed point theorem}
\begin{abstract}
This paper is devoted to study the existence of a solution to Hilfer fractional differential equation with nonlocal boundary condition. We use the equivalent integral equation to study the considered Hilfer differential problem with nonlocal boundary condition. The M\"{o}nch type fixed point theorem and the measure of the noncompactness technique are the main tools in this study. We demonstrate the existence of a solution with a suitable illustrative example.
\end{abstract}
\maketitle

\section{Introduction}
The calculus of arbitrary order has been extensively studied in the last
four decades. It has been proved to be an adequate tool in almost all
branches of science and engineering. Because of its widespread applications,
fractional calculus is becoming an integral part of applied mathematics
research. Indeed, fractional differential equations have been found useful
to describe abundant phenomena in physics and engineering, and the modest
amount of work in this direction has taken place, see \cite{ABLZ,AG,KD} and
references therein. For basic development and theoretical applications of
fractional differential equations, see \cite{HI,KL1}.

In the past two decades, the fractional differential equations are
extensively studied for existence, uniqueness, continuous dependence and
stability of the solution. For some fundamental results in existence theory
of various fractional differential problems with initial and boundary
conditions, see survey papers \cite{ABLZ,AG}, the monograph \cite{KL1}, the
research papers \cite{AP1,AP2,SP1,SPN,SP5,DB1,DBN,SPB,DB2,KD,FK,KM,FMG,HLT,VE,WZ} and references
therein.

In the year 2018, Thabet et al. \cite{SA} investigated the existence of a
solution to BVP for Hilfer FDEs:
\begin{equation}
D_{a^{+}}^{\mu ,\nu }z(t)=f\left( t,z(t),Sz(t)\right) ,0<\mu <1,0\leq \nu
\leq 1,\text{\qquad\ \ }t\in (a,b],  \label{11}
\end{equation}%
\begin{equation}
I_{a^{+}}^{1-\gamma }\left[ uz(a^{+})+vz(b^{-})\right] =w,\text{ }\ \mu \leq
\gamma =\mu +\nu (1-\mu ),u,v,w\in
\mathbb{R}
,  \label{12}
\end{equation}%
by using the M\"{o}nch fixed point theorem.

Recently, in \cite{APB}, Abdo et al. obtained the existence of the solutions
of BVP for the class of Hilfer FDEs:
\begin{equation}
D_{a^{+}}^{\mu ,\nu }z(t)=f(t,z(t)),\text{ \ }p-1<\mu <p,\,0\leq \nu \leq
1\qquad \qquad \ \ \   \label{e8.1a}
\end{equation}%
\begin{equation}
I_{a^{+}}^{1-\gamma }\left[ cz(a^{+})+dz(b^{-})\right] =e,\text{\ \ \ }\mu
\leq \gamma =\mu +\nu (1-\mu ),\qquad \ \ \   \label{e8.1b}
\end{equation}%
by using the Schauder, Schaefer and Krasnosel'skii's fixed point theorems.

Motivated by works cited above, in this paper, we consider the nonlocal
boundary value problem for a class of Hilfer fractional differential
equations (HNBVP):
\begin{equation}
D_{a^{+}}^{\mu ,\nu }z(t)=f(t,z(t)),\text{ \ }0<\mu <1,\,0\leq \nu \leq
1,t\in (a,b],\qquad \ \ \ \ \ \ \ \qquad \qquad  \label{e8.1}
\end{equation}%
\begin{equation}
I_{a^{+}}^{1-\gamma }cz(a^{+})+I_{a^{+}}^{1-\gamma
}dz(b^{-})=\sum_{k=1}^{m}\lambda _{k}z(\tau _{k}),\tau _{k}\in (a,b],\ \mu
\leq \gamma =\mu +\nu -\mu \nu ,  \label{e8.2}
\end{equation}%
where $D_{a^{+}}^{\mu ,\nu }$ is the generalized Hilfer fractional
derivative of order $\mu $ and type $\nu $, $I_{a^{+}}^{1-\gamma }$ is the
Riemann-Liouville fractional integral of order $1-\gamma $, $f:(a,b]\times
\mathbb{R}\rightarrow \mathbb{R}$ be a function such that $f(t,z)\in
C_{1-\gamma }[a,b]$ for any $z\in C_{1-\gamma }[a,b]$ and $c,d\in \mathbb{R}$%
, for $k=1,2,\cdots,m$.\newline
The measure of noncompactness technique and a fixed point theorem of Monch
type are the main tools in this analysis.

The paper is organized as follows: Some preliminary concepts related to our
problem are listed in Section 2 which will be useful in the sequel. In
Section 3, we first establish an equivalent integral equation of BVP and
then we present the existence of its solution. An illustrative example is
provided in the last section.

\section{Preliminaries}

In this section, we present some definitions, lemmas and weighted spaces
which are useful in further development of this paper.

Let $J_{1}=[a,b]$ and $J_{2}=(a,b]\-\infty <a<b<+\infty.$ Let $C(J_{1},E),$ $AC(J_{1},E)$ and $C^{n}(J_{1},E)$ be the Banach spaces of all
continuous, absolutely continuous, $p-$times continuous and continuously
differentiable functions on $J_{1},$ respectively. Here $L^{p}(J_{1},E),$ $%
p>1,$ is the Banach space of measurable functions on $J_{1}$ with the $L^p$
norm where
\begin{equation*}
\left\Vert p\right\Vert _{L^{p}}=\left( \int_{a}^{b}\left\vert
p(s)\right\vert ^{p}ds\right) ^{\frac{1}{p}}<\infty.
\end{equation*}
Let $L^{\infty }(J_{1},E)$ be the Banach space of measurable functions $%
z:J_{1}\longrightarrow E$ which are bounded and equipped with the norm $%
\left\Vert z\right\Vert _{L^{\infty }}=\inf \{e>0:\left\Vert z\right\Vert
\leq e,$ a.e $t\in J_{1}\}.$ Moreover, for a given set $\mathcal{V}$ of
functions $v:J_{1}\longrightarrow E$ let us denote by%
\begin{equation*}
\mathcal{V(}t)=\{v(t):v\in \mathcal{V};t\in J_{1}\},
\end{equation*}%
\begin{equation*}
\mathcal{V(}J_{1})=\{v(t):v\in \mathcal{V};t\in J_{1}\}.
\end{equation*}

\begin{definition}
\cite{KL1} Let $g:[a,\infty )\rightarrow R$ is a real valued continuous
function. The left sided Riemann-Liouville fractional integral of $g$ of
order $\mu >0$ is defined by
\begin{equation}
I_{a^{+}}^{\mu }g(t)=\frac{1}{\Gamma (\mu )}\int_{a}^{t}(t-s)^{\mu
-1}g(s)ds,\quad t>a,  \label{d1}
\end{equation}%
where $\Gamma (\cdot )$ is the Euler's Gamma function and $a\in
\mathbb{R}
.$ provided the right hand side is pointwise defined on $(a,\infty ).$
\end{definition}

\begin{definition}
\cite{KL1} Let $g:[a,\infty )\rightarrow R$ is a real valued continuous
function. The left sided Riemann-Liouville fractional derivative of $g$ of
order $\mu >0$ is defined by
\begin{equation}
D_{a^{+}}^{\mu }g(t)=\frac{1}{\Gamma (p-\mu )}\frac{d^{n}}{dt^{n}}%
\int_{a}^{t}(t-s)^{n-\mu -1}g(s)ds,  \label{d2}
\end{equation}%
where $n=[\mu ]+1,$ and $[\mu ]$ denotes the integer part of $\mu .$
\end{definition}

\begin{definition}
\label{7} \cite{HI} The left sided Hilfer fractional derivative of
function $g\in L^{1}(a,b)$ of order $0<\mu <1$ and type $0\leq \nu \leq 1$
is denoted as $D_{a^{+}}^{\mu ,\nu }$ and defined by
\begin{equation}
D_{a^{+}}^{\mu ,\nu }g(t)=I_{a^{+}}^{\nu (1-\mu )}D^{p}I_{a^{+}}^{(1-\nu
)(1-\mu )}g(t),\text{ }D^{n}=\frac{d^{n}}{dt^{n}}.  \label{d3}
\end{equation}%
where $I_{a^{+}}^{\mu }$ and $D_{a^{+}}^{\mu }$ are Riemann-Liouville
fractional integral and derivative defined by \eqref{d1} and \eqref{d2},
respectively.
\end{definition}

\begin{remark}
\label{rem8.a} From Definition \ref{7}, we observe that:

\begin{itemize}
\item[(i)] The operator $D_{a^{+}}^{\mu ,\nu }$ can be written as
\begin{equation*}
D_{a^{+}}^{\mu ,\nu }=I_{a^{+}}^{\nu (1-\mu )}D^{p}I_{a^{+}}^{(1-\gamma
)}=I_{a^{+}}^{\nu (1-\mu )}D^{\gamma },~~~~~~~~\gamma =\mu +\nu -\mu \nu
\text{.}
\end{equation*}

\item[(ii)] The Hilfer fractional derivative can be regarded as an
interpolator between the Riemann-Liouville derivative ($\nu =0$) and Caputo
derivative ($\nu =1$) as
\begin{equation*}
D_{a^{+}}^{\mu ,\nu }=%
\begin{cases}
DI_{a^{+}}^{(1-\mu )}=~D_{a^{+}}^{\mu },~~~~~~~~~~if~\nu =0; \\
I_{a^{+}}^{(1-\mu )}D=~^{c}D_{a^{+}}^{\mu },~~~~~~~~if~\nu =1.%
\end{cases}%
\end{equation*}

\item[(iii)] In particular, if $\gamma =\mu +\nu -\mu \nu ,$ then
\begin{equation*}
(D_{a^{+}}^{\mu ,\nu }g)(t)=\Big(I_{a^{+}}^{\nu (1-\mu )}\Big(%
D_{a^{+}}^{\gamma }g\Big)\Big)(t),
\end{equation*}%
where $\Big(D_{a^{+}}^{\gamma }g\Big)(t)=\frac{d}{dt}\Big(I_{a^{+}}^{(1-\nu
)(1-\mu )}g\Big)(t).$
\end{itemize}
\end{remark}

\begin{definition}
\cite{KL1} Let $0\leq \gamma <1.$\ The weighted spaces $C_{\gamma }[a,b]$
and $C_{1-\gamma }^{n}[a,b]$ are defined by
\begin{equation*}
C_{\gamma }[a,b]=\{g:(a,b]\rightarrow \mathbb{R}:(t-a)^{\gamma }g(t)\in
C[a,b]\},
\end{equation*}%
and
\begin{equation*}
C_{\gamma }^{n}[a,b]=\{g:(a,b]\rightarrow \mathbb{R},g\in
C^{n-1}[a,b]:g^{(n)}(t)\in C_{\gamma }[a,b]\},\,n\in \mathbb{%
\mathbb{N}
}
\end{equation*}%
with the norms%
\begin{equation*}
{\Vert g\Vert }_{C_{\gamma }}={\Vert }(t-a)^{\gamma }{{g}\Vert }_{C}=\max
\{\left\vert (t-a)^{\gamma }{g(t)}\right\vert :t\in \lbrack a,b]\},
\end{equation*}%
and
\begin{equation}
{\Vert g\Vert }_{C_{1-\gamma }^{n}}=\sum_{k=0}^{n-1}{\Vert g^{(k)}\Vert }%
_{C}+{\Vert g^{(n)}\Vert }_{C_{1-\gamma }},  \label{n1}
\end{equation}%
respectively. Furthermore we recall following weighted spaces
\begin{equation}
C_{1-\gamma }^{\mu ,\nu }[a,b]=\big\{g\in {C_{1-\gamma }[a,b]}%
:D_{a^{+}}^{\mu ,\nu }g\in {C_{1-\gamma }[a,b]}\big\},\quad \gamma =\mu +\nu
(1-\mu )  \label{w1}
\end{equation}%
and%
\begin{equation*}
C_{1-\gamma }^{\gamma }[a,b]=\big\{g\in {C_{1-\gamma }[a,b]}%
:D_{a^{+}}^{\gamma }g\in {C_{1-\gamma }[a,b]}\big\},\quad \gamma =\mu +\nu
(1-\mu ).
\end{equation*}%
where Let $0<\mu <1,0\leq \nu \leq 1$ and $\gamma =\mu +\nu -\mu \nu $.
Clearly, $D_{a^{+}}^{\mu ,\nu }g=I_{a^{+}}^{\nu (1-\mu )}D_{a^{+}}^{\gamma
}g $ and $C_{1-\gamma }^{\gamma }[a,b]\subset C_{1-\gamma }^{\mu ,\nu
}[a,b]. $
\end{definition}

\begin{lemma}
\label{def8.5} \cite{KD} If $\mu >0$ and $\nu >0,$ and $g\in L^{1}(a,b)$
for $t\in \lbrack a,b]$, then the following properties hold:
\begin{equation*}
\Big(I_{a^{+}}^{\mu }I_{a^{+}}^{\nu }g\Big)(t)=\Big(I_{a^{+}}^{\mu +\nu }g %
\Big)(t)\,\, \text{and }\Big(D_{a^{+}}^{\mu }I_{a^{+}}^{\nu }g\Big)(t)=g(t).
\end{equation*}
In particular, if $f\in C_{\gamma }[a,b]$ or $f\in C[a,b]$, then the above
properties hold for each $t\in (a,b]$ or $t\in \lbrack a,b]$ respectively.
\end{lemma}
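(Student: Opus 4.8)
The plan is to handle the two identities in turn: first establish the semigroup law for the Riemann--Liouville integral by Fubini's theorem and a Beta-function evaluation, and then deduce the cancellation identity from it together with the elementary fact that $D^{n}I_{a^{+}}^{n}$ is the identity.

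\emph{Semigroup property.} Inserting the definition \eqref{d1} twice, $\big(I_{a^{+}}^{\mu }I_{a^{+}}^{\nu }g\big)(t)=\frac{1}{\Gamma(\mu)\Gamma(\nu)}\int_{a}^{t}(t-s)^{\mu-1}\int_{a}^{s}(s-r)^{\nu-1}g(r)\,dr\,ds$. Since $g\in L^{1}(a,b)$ and the kernels $(t-s)^{\mu-1}$, $(s-r)^{\nu-1}$ are integrable near their singularities for $\mu,\nu>0$, the iterated integral is absolutely convergent; I would invoke Tonelli's theorem to justify this and then Fubini's theorem to interchange the order of integration over the triangle $a\le r\le s\le t$. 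This produces $\frac{1}{\Gamma(\mu)\Gamma(\nu)}\int_{a}^{t}g(r)\Big(\int_{r}^{t}(t-s)^{\mu-1}(s-r)^{\nu-1}\,ds\Big)dr$, and the substitution $s=r+(t-r)\theta$, $\theta\in[0,1]$, turns the inner integral into $(t-r)^{\mu+\nu-1}B(\mu,\nu)$. Using $B(\mu,\nu)=\Gamma(\mu)\Gamma(\nu)/\Gamma(\mu+\nu)$ yields exactly $\big(I_{a^{+}}^{\mu+\nu}g\big)(t)$.

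\emph{Cancellation identity.} With $n=[\mu]+1$ one has $D_{a^{+}}^{\mu}=D^{n}I_{a^{+}}^{n-\mu}$ by \eqref{d2}, so applying the semigroup property just proved, $D_{a^{+}}^{\mu}I_{a^{+}}^{\nu}g=D^{n}I_{a^{+}}^{n-\mu}I_{a^{+}}^{\nu}g=D^{n}I_{a^{+}}^{\,n-\mu+\nu}g$; in the matching-order case relevant here this exponent is $n$, reducing the claim to $D^{n}I_{a^{+}}^{n}g=g$. The latter is iterated differentiation under the integral sign: for $g\in L^{1}(a,b)$ the function $I_{a^{+}}^{n}g$ is $n$ times differentiable with $n$-th derivative equal to $g$ almost everywhere, by the (Lebesgue form of the) fundamental theorem of calculus applied $n$ times. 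Finally, if $g\in C_{\gamma}[a,b]$ or $g\in C[a,b]$, then $I_{a^{+}}^{\nu}g$ and $I_{a^{+}}^{\mu+\nu}g$ are continuous on $(a,b]$, respectively on $[a,b]$, so the almost-everywhere identities hold at each such $t$.

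I expect the main obstacle to be purely measure-theoretic: verifying the absolute convergence of the double integral so that Fubini legitimately applies, and verifying that $I_{a^{+}}^{n}g$ genuinely has $g$ as its $n$-th derivative almost everywhere when $g$ is merely $L^{1}$ — both are exactly where the hypotheses $g\in L^{1}(a,b)$ and $\mu,\nu>0$ are used in full. The Beta-integral evaluation and the bookkeeping with the exponents are routine.
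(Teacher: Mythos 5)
The paper offers no proof of this lemma --- it is quoted from the cited literature (Kilbas et al./Diethelm) --- so there is nothing internal to compare against; your argument is the standard textbook proof and it is correct. The Tonelli--Fubini interchange over the triangle $a\le r\le s\le t$ followed by the substitution $s=r+(t-r)\theta$ and the Beta identity $B(\mu,\nu)=\Gamma(\mu)\Gamma(\nu)/\Gamma(\mu+\nu)$ is exactly how the semigroup law is established, and reducing $D_{a^{+}}^{\mu}I_{a^{+}}^{\nu}$ to $D^{n}I_{a^{+}}^{n}$ via the semigroup law is the right route for the second identity. One point worth making explicit: as literally printed, the identity $\big(D_{a^{+}}^{\mu}I_{a^{+}}^{\nu}g\big)(t)=g(t)$ for independent $\mu,\nu>0$ is false (for $\nu>\mu$ one gets $I_{a^{+}}^{\nu-\mu}g$, not $g$); it holds only in the matching-order case $\nu=\mu$, which is what the cited source actually asserts. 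You recognized this and silently restricted to that case --- that is the correct reading, but in a written proof you should state the restriction rather than leave it implicit in the phrase ``the matching-order case relevant here.'' The remaining measure-theoretic points you flag (absolute convergence of the double integral for $g\in L^{1}$, and $\,(I_{a^{+}}^{n}g)^{(n)}=g$ a.e.\ via the Lebesgue fundamental theorem of calculus applied $n$ times) are handled adequately, and the passage from a.e.\ equality to pointwise equality on $(a,b]$ or $[a,b]$ under the continuity hypotheses is sound.
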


\begin{lemma}
\label{Le1}\cite{KL1} For $t>a,$ we have

\begin{description}
\item[(i)] $I_{a^{+}}^{\mu }(t-a)^{\delta -1}=\frac{\Gamma (\delta )}{\Gamma
(\delta +\mu )}(t-a)^{\delta +\mu -1},\quad \mu \geq 0,\delta >0.$\newline

\item[(ii)] $D_{a^{+}}^{\mu }(t-a)^{\mu -1}=0,\quad \mu \in (0,1).$
\end{description}
\end{lemma}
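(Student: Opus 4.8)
The plan is to verify both identities by direct computation from the definitions \eqref{d1} and \eqref{d2}, the only nontrivial ingredient being the Euler/Beta integral together with the Beta--Gamma relation $\int_0^1 \sigma^{\delta-1}(1-\sigma)^{\mu-1}\,d\sigma = \Gamma(\delta)\Gamma(\mu)/\Gamma(\delta+\mu)$.

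For (i), the case $\mu=0$ is immediate since $I_{a^+}^0$ is the identity and $\Gamma(\delta)/\Gamma(\delta)=1$. For $\mu>0$ I would start from
\[
I_{a^+}^{\mu}(t-a)^{\delta-1}=\frac{1}{\Gamma(\mu)}\int_a^t (t-s)^{\mu-1}(s-a)^{\delta-1}\,ds,
\]
and make the substitution $s=a+(t-a)\sigma$, $ds=(t-a)\,d\sigma$, under which $t-s=(t-a)(1-\sigma)$ and the limits $s=a,t$ become $\sigma=0,1$. This pulls out the factor $(t-a)^{\mu+\delta-1}$ and leaves the Beta integral $\int_0^1 \sigma^{\delta-1}(1-\sigma)^{\mu-1}\,d\sigma$, which converges precisely because $\mu,\delta>0$; evaluating it by the Beta--Gamma relation and cancelling $\Gamma(\mu)$ gives the claimed formula.

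For (ii), since $\mu\in(0,1)$ we have $n=[\mu]+1=1$ in \eqref{d2}, so $D_{a^+}^{\mu}g(t)=\frac{d}{dt}\,I_{a^+}^{1-\mu}g(t)$. Applying part (i) with parameter $\delta=\mu>0$ and integration order $1-\mu\geq 0$ yields $I_{a^+}^{1-\mu}(t-a)^{\mu-1}=\frac{\Gamma(\mu)}{\Gamma(1)}(t-a)^{0}=\Gamma(\mu)$, which is constant in $t$, so differentiating gives $D_{a^+}^{\mu}(t-a)^{\mu-1}=0$. There is no genuine obstacle in either part; the only points deserving a word of care are the convergence of the Beta integral (ensured by the standing hypotheses $\mu,\delta>0$) and the fact that in (ii) the outer derivative is applied to a constant, which makes the differentiation trivially legitimate.
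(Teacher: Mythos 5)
Your proof is correct. The paper itself gives no proof of this lemma --- it is quoted directly from the reference \cite{KL1} --- and your argument (the substitution $s=a+(t-a)\sigma$ reducing (i) to the Beta--Gamma identity, then deducing (ii) by noting that $I_{a^+}^{1-\mu}(t-a)^{\mu-1}=\Gamma(\mu)$ is constant so its derivative vanishes) is exactly the standard computation found in that source.
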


\begin{lemma}
\label{def8.8} \cite{HI} Let $\mu >0$, $\nu >0$ and $\gamma =\mu +\nu -\mu
\nu .$ If $g\in C_{1-\gamma }^{\gamma }[a,b]$, then\newline
\begin{equation*}
I_{a^{+}}^{\gamma }D_{a^{+}}^{\gamma }g=I_{a^{+}}^{\mu }D_{a^{+}}^{\mu ,\nu
}g,~D_{a^{+}}^{\gamma }I_{a^{+}}^{\mu }g=D_{a^{+}}^{\nu (1-\mu )}g.
\end{equation*}
\end{lemma}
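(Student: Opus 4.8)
The plan is to obtain both identities by unwinding the operator factorizations recorded in Remark \ref{rem8.a} and then collapsing the resulting compositions with the semigroup law of Lemma \ref{def8.5}; the hypothesis $g\in C_{1-\gamma}^{\gamma}[a,b]$ will serve only to guarantee that every Riemann--Liouville integral appearing below acts on an $L^{1}(a,b)$ function, so that those composition rules are legitimate. I will use throughout the elementary identities $\gamma=\mu+\nu(1-\mu)$, $1-\gamma=(1-\mu)(1-\nu)$, and hence $1-\gamma+\mu=1-\nu(1-\mu)$.

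For the first identity I would start from Remark \ref{rem8.a}(iii), $D_{a^{+}}^{\mu,\nu}g=I_{a^{+}}^{\nu(1-\mu)}D_{a^{+}}^{\gamma}g$, and apply $I_{a^{+}}^{\mu}$ to both sides. Since $g\in C_{1-\gamma}^{\gamma}[a,b]$ we have $D_{a^{+}}^{\gamma}g\in C_{1-\gamma}[a,b]\subset L^{1}(a,b)$, so Lemma \ref{def8.5} gives $I_{a^{+}}^{\mu}I_{a^{+}}^{\nu(1-\mu)}D_{a^{+}}^{\gamma}g=I_{a^{+}}^{\mu+\nu(1-\mu)}D_{a^{+}}^{\gamma}g=I_{a^{+}}^{\gamma}D_{a^{+}}^{\gamma}g$, which is exactly $I_{a^{+}}^{\mu}D_{a^{+}}^{\mu,\nu}g=I_{a^{+}}^{\gamma}D_{a^{+}}^{\gamma}g$. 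For the second identity, note that since $\gamma\in(0,1]$ the Riemann--Liouville derivative \eqref{d2} reduces to $D_{a^{+}}^{\gamma}=\frac{d}{dt}I_{a^{+}}^{1-\gamma}$, consistently with Remark \ref{rem8.a}(iii) because $1-\gamma=(1-\mu)(1-\nu)$. Then $D_{a^{+}}^{\gamma}I_{a^{+}}^{\mu}g=\frac{d}{dt}I_{a^{+}}^{1-\gamma}I_{a^{+}}^{\mu}g$; applying Lemma \ref{def8.5} to $g\in C_{1-\gamma}[a,b]\subset L^{1}(a,b)$ with the positive exponents $1-\gamma$ and $\mu$ yields $\frac{d}{dt}I_{a^{+}}^{1-\gamma+\mu}g=\frac{d}{dt}I_{a^{+}}^{1-\nu(1-\mu)}g$, and by \eqref{d2} (again with $n=1$, since $0\le\nu(1-\mu)<1$) this equals $D_{a^{+}}^{\nu(1-\mu)}g$.

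Neither computation is long once the factorizations are written down, so the only point that really needs care is the legitimacy of the composition rules: one must check that the functions fed to $I_{a^{+}}^{\mu}I_{a^{+}}^{\nu(1-\mu)}$ and to $I_{a^{+}}^{1-\gamma}I_{a^{+}}^{\mu}$ genuinely lie in $L^{1}(a,b)$ (equivalently, in the weighted continuous spaces to which Lemma \ref{def8.5} applies), which is precisely what membership of $g$ in $C_{1-\gamma}^{\gamma}[a,b]$ supplies, and to read the borderline cases $\nu=1$ (so $\gamma=1$) and $\nu(1-\mu)=0$ by interpreting $I_{a^{+}}^{0}$ and $D_{a^{+}}^{0}$ as the identity operator.
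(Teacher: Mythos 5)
The paper states this lemma only as a citation to Hilfer's book and supplies no proof of its own, so there is nothing internal to compare against; your argument is correct and is the standard one. Both identities do follow, as you show, from the factorizations $D_{a^{+}}^{\mu ,\nu }=I_{a^{+}}^{\nu (1-\mu )}D_{a^{+}}^{\gamma }$ and $D_{a^{+}}^{\gamma }=\frac{d}{dt}I_{a^{+}}^{1-\gamma }$ (using $1-\gamma =(1-\mu )(1-\nu )$ and $1-\gamma +\mu =1-\nu (1-\mu )$) combined with the semigroup law of Lemma \ref{def8.5}, and your use of $g\in C_{1-\gamma }^{\gamma }[a,b]$ to place both $g$ and $D_{a^{+}}^{\gamma }g$ in $C_{1-\gamma }[a,b]\subset L^{1}(a,b)$ is precisely the justification those composition rules require, with the borderline cases $\nu (1-\mu )=0$ and $\gamma =1$ correctly read as $I_{a^{+}}^{0}=\mathrm{Id}$.
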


\begin{lemma}
\label{Le2} \cite{HI} Let $0<\mu <1,$ $0\leq \nu \leq 1$ and $g\in
C_{1-\gamma }[a,b].$ Then%
\begin{equation*}
I_{a^{+}}^{\mu }D_{a^{+}}^{\mu ,\nu }g(t)=g(t)-\frac{I_{a^{+}}^{(1-\nu
)(1-\mu )}g(a)}{\Gamma (\mu +\nu (1-\mu ))}(t-a)^{\mu +\nu (1-\mu )-1},\quad
\text{for all}\quad t\in (a,b],
\end{equation*}%
Moreover, if $\ \gamma =\mu +\nu -\mu \nu ,$ $g\in C_{1-\gamma }[a,b]$ and $%
I_{a^{+}}^{1-\gamma }g\in C_{1-\gamma }^{n}[a,b],$ then
\begin{equation*}
I_{a^{+}}^{\gamma }D_{a^{+}}^{\gamma }g(t)=g(t)-\frac{I_{a^{+}}^{1-\gamma
}g(a)}{\Gamma (\gamma )}(t-a)^{\gamma -1},\quad \text{for all}\quad t\in
(a,b].
\end{equation*}
\end{lemma}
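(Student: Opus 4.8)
The plan is to reduce both identities to the single computation of $I_{a^+}^{\gamma}D_{a^+}^{\gamma}g$ and then to evaluate that one by a direct \emph{Fubini-and-differentiate} argument. For the first identity I would start from Remark~\ref{rem8.a}, which gives the factorization $D_{a^+}^{\mu,\nu}g = I_{a^+}^{\nu(1-\mu)}D_{a^+}^{\gamma}g$; then, by the semigroup law in Lemma~\ref{def8.5} and the relation $\gamma = \mu+\nu(1-\mu)$,
\begin{equation*}
I_{a^+}^{\mu}D_{a^+}^{\mu,\nu}g = I_{a^+}^{\mu}I_{a^+}^{\nu(1-\mu)}D_{a^+}^{\gamma}g = I_{a^+}^{\mu+\nu(1-\mu)}D_{a^+}^{\gamma}g = I_{a^+}^{\gamma}D_{a^+}^{\gamma}g .
\end{equation*}
Since moreover $1-\gamma = (1-\mu)(1-\nu)$, the first assertion is nothing but the second one with the exponents written out, so it suffices to establish the second formula.

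To prove $I_{a^+}^{\gamma}D_{a^+}^{\gamma}g(t) = g(t) - \frac{I_{a^+}^{1-\gamma}g(a)}{\Gamma(\gamma)}(t-a)^{\gamma-1}$, I would set $\phi := I_{a^+}^{1-\gamma}g$. The hypothesis $I_{a^+}^{1-\gamma}g\in C_{1-\gamma}^{n}[a,b]$ (with $n=1$, which is the relevant case as $0<\gamma<1$) makes $\phi$ absolutely continuous on $[a,b]$, and $D_{a^+}^{\gamma}g = \frac{d}{dt}\phi = \phi'$. Writing $\phi(s) = \phi(a) + \int_a^s \phi'(r)\,dr$, I would apply $I_{a^+}^{\gamma}$, split off the constant term (evaluating $\int_a^t (t-s)^{\gamma-1}ds = (t-a)^{\gamma}/\gamma$), and interchange the order of integration in the remaining double integral — legitimate because $\phi'$ is integrable and the kernel is only weakly singular — to obtain
\begin{equation*}
I_{a^+}^{\gamma}\phi(t) = \frac{\phi(a)}{\Gamma(\gamma+1)}(t-a)^{\gamma} + \frac{1}{\Gamma(\gamma+1)}\int_a^t (t-r)^{\gamma}\phi'(r)\,dr = \frac{\phi(a)}{\Gamma(\gamma+1)}(t-a)^{\gamma} + I_{a^+}^{\gamma+1}\phi'(t) .
\end{equation*}

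Differentiating this identity in $t$ finishes the argument. On the left, $\frac{d}{dt}I_{a^+}^{\gamma}\phi = D_{a^+}^{1-\gamma}\phi = D_{a^+}^{1-\gamma}I_{a^+}^{1-\gamma}g = g$, by the very definition of the Riemann--Liouville derivative of order $1-\gamma$ and by Lemma~\ref{def8.5}. On the right, $\frac{d}{dt}(t-a)^{\gamma} = \gamma(t-a)^{\gamma-1}$ with $\gamma/\Gamma(\gamma+1) = 1/\Gamma(\gamma)$, and $\frac{d}{dt}I_{a^+}^{\gamma+1}\phi' = \frac{d}{dt}I_{a^+}^{1}I_{a^+}^{\gamma}\phi' = I_{a^+}^{\gamma}\phi'$ again by Lemma~\ref{def8.5}. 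Hence $g = \frac{\phi(a)}{\Gamma(\gamma)}(t-a)^{\gamma-1} + I_{a^+}^{\gamma}\phi'$, and recalling $\phi' = D_{a^+}^{\gamma}g$ and $\phi(a) = I_{a^+}^{1-\gamma}g(a)$ yields the second formula; substituting it into the reduction of the first paragraph produces the first.

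The main obstacle is not the algebra but handling the singularity at $t=a$ carefully: one must check that the stated regularity — $g\in C_{1-\gamma}[a,b]$ for the first formula, together with $I_{a^+}^{1-\gamma}g\in C_{1-\gamma}^{1}[a,b]$ for the second — is exactly what guarantees that $\phi$ is absolutely continuous, that the Fubini interchange is valid, and that the cancellation $D_{a^+}^{1-\gamma}I_{a^+}^{1-\gamma}g = g$ holds; I would also remark that in the first formula the mere existence of $D_{a^+}^{\mu,\nu}g$ already forces $I_{a^+}^{1-\gamma}g$ to be differentiable, so no hidden hypothesis is being added. Everything else is bookkeeping with the exponents $\gamma$, $1-\gamma=(1-\mu)(1-\nu)$ and $\nu(1-\mu)=\gamma-\mu$, using Lemma~\ref{Le1}(i) to evaluate the powers of $(t-a)$.
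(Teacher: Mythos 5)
The paper does not prove this lemma at all: it is imported verbatim from the literature (cited to \cite{HI}; it is the standard ``fundamental theorem'' for the Hilfer derivative, cf.\ also Furati--Kassim \cite{FK} and Kilbas et al.\ \cite{KL1}), so there is no in-paper argument to compare against. Your derivation is a correct, self-contained proof along the classical lines. The reduction of the first identity to the second via $D_{a^+}^{\mu,\nu}=I_{a^+}^{\nu(1-\mu)}D_{a^+}^{\gamma}$, the semigroup law, and the exponent identities $\mu+\nu(1-\mu)=\gamma$, $(1-\nu)(1-\mu)=1-\gamma$ is exactly the content of Remark~\ref{rem8.a} and Lemma~\ref{def8.8}, and the Fubini-and-differentiate computation for $I_{a^+}^{\gamma}D_{a^+}^{\gamma}g$ is the standard proof. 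Two points deserve the care you already flag: (i) the hypothesis $I_{a^+}^{1-\gamma}g\in C_{1-\gamma}^{1}[a,b]$ gives $\phi'\in C_{1-\gamma}[a,b]$, hence $|\phi'(t)|\le C(t-a)^{\gamma-1}$ is integrable and $\phi$ is genuinely absolutely continuous, which is what legitimizes both $\phi(s)=\phi(a)+\int_a^s\phi'(r)\,dr$ and the Fubini step; (ii) in the first formula, as literally stated with only $g\in C_{1-\gamma}[a,b]$, this regularity is not explicit but is implicitly required for $D_{a^+}^{\mu,\nu}g$ to exist at all --- strictly speaking one needs $I_{a^+}^{1-\gamma}g$ absolutely continuous rather than merely differentiable, a gap present in the source statement itself rather than in your argument. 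With that caveat made explicit, the proof is complete.
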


\begin{lemma}
\label{def8.7} \cite{HLT} If $0\leq \gamma <1$ and $g\in C_{\gamma }[a,b]$%
, then
\begin{equation*}
(I_{a^{+}}^{\mu }g)(a)=\lim_{t\rightarrow a^{+}}I_{a^{+}}^{\mu
}g(t)=0,~0<\mu \leq \gamma .
\end{equation*}
\end{lemma}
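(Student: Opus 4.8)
The plan is to estimate $(I_{a^{+}}^{\mu }g)(t)$ directly from the integral \eqref{d1} and to track the exact rate at which this estimate decays as $t\to a^{+}$. The only structural input is that $g\in C_{\gamma }[a,b]$ means $(s-a)^{\gamma }g(s)$ extends continuously to $[a,b]$, hence is bounded; writing $\Vert g\Vert _{C_{\gamma }}=\max_{s\in [a,b]}|(s-a)^{\gamma }g(s)|$, this gives the pointwise bound $|g(s)|\le \Vert g\Vert _{C_{\gamma }}(s-a)^{-\gamma }$ on $(a,b]$. Inserting it into \eqref{d1} yields
\begin{equation*}
\bigl|(I_{a^{+}}^{\mu }g)(t)\bigr|\le \frac{\Vert g\Vert _{C_{\gamma }}}{\Gamma (\mu )}\int_{a}^{t}(t-s)^{\mu -1}(s-a)^{-\gamma }\,ds .
\end{equation*}

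First I would evaluate the remaining integral by the change of variable $s=a+(t-a)u$, $u\in [0,1]$, which reduces it to a Beta integral that converges exactly because $\mu >0$ and $\gamma <1$:
\begin{equation*}
\int_{a}^{t}(t-s)^{\mu -1}(s-a)^{-\gamma }\,ds=(t-a)^{\mu -\gamma }\,\frac{\Gamma (\mu )\,\Gamma (1-\gamma )}{\Gamma (\mu +1-\gamma )}.
\end{equation*}
Cancelling $\Gamma (\mu )$ gives the sharp one-sided estimate
\begin{equation*}
\bigl|(I_{a^{+}}^{\mu }g)(t)\bigr|\le \Vert g\Vert _{C_{\gamma }}\,\frac{\Gamma (1-\gamma )}{\Gamma (\mu +1-\gamma )}\,(t-a)^{\mu -\gamma },\qquad t\in (a,b].
\end{equation*}
Passing to the limit $t\to a^{+}$, the whole right-hand side is controlled by the single factor $(t-a)^{\mu -\gamma }$, and the claimed identity $(I_{a^{+}}^{\mu }g)(a)=0$ follows the moment this factor tends to $0$.

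The main obstacle is precisely the sign of the exponent $\mu -\gamma$, and this is where the stated range $0<\mu \le \gamma $ must be examined with care. The estimate forces the limit to vanish only when $\mu -\gamma >0$; at the borderline $\mu =\gamma $ the bound degenerates to the nonzero constant $\Vert g\Vert _{C_{\gamma }}\,\Gamma (1-\gamma )$, and this is not merely a defect of the estimate. Indeed, choosing $g(s)=(s-a)^{-\gamma }\in C_{\gamma }[a,b]$ and applying Lemma \ref{Le1}(i) with $\delta =1-\gamma $ gives $(I_{a^{+}}^{\mu }g)(t)=\frac{\Gamma (1-\gamma )}{\Gamma (1-\gamma +\mu )}(t-a)^{\mu -\gamma }$, whose limit as $t\to a^{+}$ equals $\Gamma (1-\gamma )\neq 0$ when $\mu =\gamma $ and is unbounded when $\mu <\gamma $. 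Thus the three steps above establish the stated vanishing exactly in the regime $\gamma <\mu $, where $(t-a)^{\mu -\gamma }\to 0$; the decisive hypothesis should therefore be read as $\gamma <\mu $, since the conclusion fails in general precisely on the range $0<\mu \le \gamma $ as written. With that reading the argument is complete, and I expect the only genuine work to be the routine Beta-integral computation of the second step.
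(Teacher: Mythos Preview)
The paper does not supply its own proof of this lemma; it is quoted from the reference \cite{HLT} and used without argument. Your direct estimate via the Beta integral is exactly the standard proof found in the literature, so there is nothing different to compare.

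More importantly, your diagnosis of the hypothesis is correct and worth recording. The inequality ``$0<\mu\le\gamma$'' in the statement is a transcription error; the classical result (and the way the lemma is actually invoked later in the proof of Lemma~\ref{lee1}) requires $0\le\gamma<\mu$. Indeed, in that application one takes $g=f(\cdot,z(\cdot))\in C_{1-\gamma}(J_1,E)$ and integral order $1-\nu(1-\mu)$; since $\gamma=\mu+\nu(1-\mu)$ one has $1-\gamma=1-\mu-\nu(1-\mu)<1-\nu(1-\mu)$, so it is precisely the inequality ``weight exponent strictly less than integration order'' that is being used, not the reverse. Your counterexample $g(s)=(s-a)^{-\gamma}$ together with Lemma~\ref{Le1}(i) shows conclusively that the printed range $0<\mu\le\gamma$ cannot be what is intended. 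With the corrected hypothesis your three-step argument is complete and sharp.
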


\begin{lemma}
\cite{MH} Let $E$ be a Banach space and let$\ \Upsilon _{E}$ be the bounded
subsets of $E$. The Kuratowski measure of noncompactness is the map ${\Large %
\alpha }:\Upsilon _{E}\longrightarrow \lbrack 0,\infty )$defined by%
\begin{equation*}
{\Large \alpha }(\mathcal{S})=\inf \{\varepsilon >0:\mathcal{S}\subset \cup
_{i=1}^{m}\mathcal{S}_{i}\text{ and the diam }(\mathcal{S}_{i})\leq
\varepsilon \};\mathcal{S}\subset \Upsilon _{E}.
\end{equation*}
\end{lemma}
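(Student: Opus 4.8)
The plan is to recognize that this statement is a cited definition rather than a theorem with real content, so the only thing that genuinely needs argument is that the map $\alpha$ is well defined: for each bounded $\mathcal{S}\in\Upsilon_{E}$ one must check that the collection
\[
\Lambda(\mathcal{S})=\Big\{\varepsilon>0:\ \mathcal{S}\subset\textstyle\bigcup_{i=1}^{m}\mathcal{S}_{i}\ \text{for some finite family with }\operatorname{diam}(\mathcal{S}_{i})\le\varepsilon\Big\}
\]
is a nonempty subset of $(0,\infty)$, so that $\alpha(\mathcal{S}):=\inf\Lambda(\mathcal{S})$ exists and lies in $[0,\infty)$, which is exactly the asserted mapping property $\alpha:\Upsilon_{E}\to[0,\infty)$.

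First I would verify nonemptiness. Since $\mathcal{S}$ is bounded, $M:=\operatorname{diam}(\mathcal{S})<\infty$, and the trivial one-element cover $m=1$, $\mathcal{S}_{1}=\mathcal{S}$ shows that every $\varepsilon\ge M$ (indeed every $\varepsilon>0$ if $M=0$) belongs to $\Lambda(\mathcal{S})$; hence $\Lambda(\mathcal{S})\neq\emptyset$. Every element of $\Lambda(\mathcal{S})$ is positive, so $0$ is a lower bound, the infimum exists, and $0\le\alpha(\mathcal{S})\le M<\infty$. That is the whole content of the statement.

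If one wished to record the standard accompanying properties that justify calling $\alpha$ a \emph{measure of noncompactness} — monotonicity under inclusion, $\alpha(\overline{\mathcal{S}})=\alpha(\mathcal{S})$, $\alpha(\mathcal{S}_{1}\cup\mathcal{S}_{2})=\max\{\alpha(\mathcal{S}_{1}),\alpha(\mathcal{S}_{2})\}$, and the regularity property that $\alpha(\mathcal{S})=0$ if and only if $\overline{\mathcal{S}}$ is compact — each follows from the definition by elementary covering arguments, the last one combining total boundedness with the completeness of $E$. Thus there is essentially no obstacle here: the only ``hard part'' is the routine check that the defining infimum is taken over a nonempty, bounded-below set, and for the substantive use later in the paper one simply invokes \cite{MH}.
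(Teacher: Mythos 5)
Your reading is correct: the paper treats this ``lemma'' as a cited definition from \cite{MH} and supplies no proof, so the only substantive content is the well-definedness of the infimum, which you verify properly by noting that a bounded set admits the trivial one-element cover with $\varepsilon=\operatorname{diam}(\mathcal{S})<\infty$, making the defining set nonempty and bounded below by $0$. This matches what the paper (implicitly) relies on, and nothing further is required.
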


\begin{lemma} \cite{GD}
For all nonempty subsets $\mathcal{S}_{1},\mathcal{S}_{2}\subset E$. The
Kuratowski measure of noncompactness ${\Large \alpha }(\mathcal{\cdot })$
satisfies the following properties:
\end{lemma}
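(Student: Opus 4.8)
The plan is to verify each of the listed properties directly from the definition
$\alpha(\mathcal{S})=\inf\{\varepsilon>0:\mathcal{S}\subset\bigcup_{i=1}^{m}\mathcal{S}_i,\ \mathrm{diam}(\mathcal{S}_i)\leq\varepsilon\}$,
handling the order-theoretic and algebraic statements first and reserving the regularity and convex-hull invariance for the end. The key observation used throughout is that if $\mathcal{S}$ admits a finite cover by sets of diameter at most $\varepsilon$, then so does any subset of $\mathcal{S}$; this gives monotonicity, $\mathcal{S}_1\subset\mathcal{S}_2\Rightarrow\alpha(\mathcal{S}_1)\leq\alpha(\mathcal{S}_2)$, at once. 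From monotonicity one gets $\alpha(\mathcal{S}_1\cup\mathcal{S}_2)\geq\max\{\alpha(\mathcal{S}_1),\alpha(\mathcal{S}_2)\}$, and the reverse inequality follows by taking the union of an admissible cover of $\mathcal{S}_1$ with one of $\mathcal{S}_2$, which yields $\alpha(\mathcal{S}_1\cup\mathcal{S}_2)=\max\{\alpha(\mathcal{S}_1),\alpha(\mathcal{S}_2)\}$.

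Next I would establish the vector-space compatibility. If $\mathcal{S}_1\subset\bigcup_{i=1}^{p}A_i$ and $\mathcal{S}_2\subset\bigcup_{j=1}^{q}B_j$ with $\mathrm{diam}(A_i)\leq\varepsilon_1$ and $\mathrm{diam}(B_j)\leq\varepsilon_2$, then $\mathcal{S}_1+\mathcal{S}_2\subset\bigcup_{i,j}(A_i+B_j)$ and $\mathrm{diam}(A_i+B_j)\leq\varepsilon_1+\varepsilon_2$, so $\alpha(\mathcal{S}_1+\mathcal{S}_2)\leq\alpha(\mathcal{S}_1)+\alpha(\mathcal{S}_2)$; likewise $\mathrm{diam}(\lambda A)=|\lambda|\,\mathrm{diam}(A)$ gives $\alpha(\lambda\mathcal{S})=|\lambda|\,\alpha(\mathcal{S})$. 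Invariance under closure, $\alpha(\overline{\mathcal{S}})=\alpha(\mathcal{S})$, uses that $\mathrm{diam}(\overline{A})=\mathrm{diam}(A)$ and that enlarging each covering set slightly keeps its diameter controlled while absorbing the closure. For the regularity property, if $\overline{\mathcal{S}}$ is compact it is totally bounded, hence for each $\varepsilon>0$ covered by finitely many balls of radius $\varepsilon/2$, so $\alpha(\mathcal{S})=0$; conversely, $\alpha(\mathcal{S})=0$ produces for each $\varepsilon>0$ a finite cover by sets of diameter $\leq\varepsilon$, and picking one point from each nonempty piece gives a finite $\varepsilon$-net, so $\mathcal{S}$ is totally bounded and, $E$ being complete, $\overline{\mathcal{S}}$ is compact.

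The delicate point — and the main obstacle — is the invariance $\alpha(\mathrm{conv}\,\mathcal{S})=\alpha(\mathrm{conv}\,\overline{\mathcal{S}})=\alpha(\mathcal{S})$. The inequality ``$\geq$'' is immediate from $\mathcal{S}\subset\mathrm{conv}\,\mathcal{S}$ and monotonicity. For ``$\leq$'', fix $\varepsilon>\alpha(\mathcal{S})$ and a finite cover $\mathcal{S}\subset\bigcup_{i=1}^{n}A_i$ with $\mathrm{diam}(A_i)\leq\varepsilon$; one writes an arbitrary point of $\mathrm{conv}\,\mathcal{S}$ as a finite convex combination of points taken from the $A_i$, groups such combinations according to the vector of total weights assigned to each index (which ranges over the standard simplex), uses the compactness of that simplex to subdivide it into finitely many pieces of small diameter, and then estimates the diameter of each resulting block by $\varepsilon+\delta$ for arbitrarily small $\delta$. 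This shows $\alpha(\mathrm{conv}\,\mathcal{S})\leq\varepsilon+\delta$; letting $\delta\to0$ and then $\varepsilon\downarrow\alpha(\mathcal{S})$ completes the proof. I expect the simplex-subdivision and diameter bookkeeping in this last step to be the only genuinely technical part; everything else is routine manipulation of finite covers.
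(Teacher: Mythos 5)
The paper states this lemma without proof, simply citing \cite{GD}, so there is no in-paper argument to compare against; your sketch supplies the standard textbook proof and is correct in all its parts, including the simplex-subdivision argument for $\alpha(\mathrm{conv}\,\mathcal{S})=\alpha(\mathcal{S})$, whose final diameter estimate does rely on the boundedness of $\mathcal{S}$ — which is guaranteed here since $\alpha$ is only defined on the bounded subsets $\Upsilon_{E}$. No gaps.
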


\begin{enumerate}
\item ${\Large \alpha }(\mathcal{S})=0\Longleftrightarrow \overline{\mathcal{%
S}}$ is compact ($\mathcal{S}$ is relatively compact);

\item ${\Large \alpha }(\mathcal{S})={\Large \alpha }(\overline{\mathcal{S}}%
)={\Large \alpha }(conv\mathcal{S}),$ where where $\overline{\mathcal{S}}$
and $conv\mathcal{S}$ denote the closure and convex hull of the bounded set $%
\mathcal{S}$ respectively;

\item $\mathcal{S}_{1}\subset \mathcal{S}_{2}\Longrightarrow {\Large \alpha }%
(\mathcal{S}_{1})\leq {\Large \alpha }(\mathcal{S}_{2});$

\item ${\Large \alpha }(\mathcal{S}_{1}+\mathcal{S}_{2})\leq {\Large \alpha }%
(\mathcal{S}_{1})+{\Large \alpha }(\mathcal{S}_{2}),$ where $\mathcal{S}_{1}+%
\mathcal{S}_{2}=\{s_{1}+s_{2}:s\in \mathcal{S}_{1},s\in \mathcal{S}_{2}\};$

\item ${\Large \alpha }(\kappa \mathcal{S})=\left\vert \kappa \right\vert
{\Large \alpha }(\overline{\mathcal{S}}),$ $\kappa \in
\mathbb{R}
;$
\end{enumerate}
For more details, see \cite{APB,SPN,GC}.
\begin{lemma}\cite{MH}
Let $\mathbb{B}$ be a bounded, closed and convex subset of a Banach
space $E$ such that $0\in \mathbb{B}$; and let ${\large \mathcal{T}}$ be a
continuous mapping of $\mathbb{B}$ into itself. If for every subset ${\large
\mathcal{V}}$ of $\mathbb{B}$%
\begin{equation*}
\mathcal{V=}\overline{co}\mathcal{T}(\mathcal{V})\text{ or }\mathcal{V=T}(%
\mathcal{V})\cup \{0\}\Longrightarrow {\Large \alpha }(\mathcal{V}){\large
\mathcal{=}}0
\end{equation*}
holds. Then ${\large \mathcal{T}}$ has a fixed point.
\end{lemma}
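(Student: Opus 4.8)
The statement to be established is the M\"{o}nch fixed point theorem, and the plan is to follow the classical route: construct the smallest closed convex $\mathcal{T}$-invariant subset of $\mathbb{B}$ containing $0$, use the noncompactness hypothesis to see that this set is in fact compact, and then conclude with Schauder's theorem.

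First I would normalise by taking $v_{0}=0\in\mathbb{B}$ and introducing the family
\begin{equation*}
\Sigma=\{\,C\subseteq\mathbb{B}:C\ \text{is closed and convex},\ 0\in C,\ \mathcal{T}(C)\subseteq C\,\},
\end{equation*}
which is nonempty since $\mathbb{B}\in\Sigma$. Set $\mathcal{D}=\bigcap_{C\in\Sigma}C$. As an intersection of closed convex sets each containing $0$, the set $\mathcal{D}$ is closed, convex and contains $0$; and since $\mathcal{D}\subseteq C$ for every $C\in\Sigma$ we get $\mathcal{T}(\mathcal{D})\subseteq\mathcal{T}(C)\subseteq C$, hence $\mathcal{T}(\mathcal{D})\subseteq\mathcal{D}$. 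Thus $\mathcal{D}\in\Sigma$ is the minimal element of $\Sigma$, and in particular $\mathcal{D}\neq\emptyset$.

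The crucial step is to show that $\mathcal{D}=\overline{co}\big(\mathcal{T}(\mathcal{D})\cup\{0\}\big)$. Write $\mathcal{W}$ for the right-hand side. Since $\mathcal{D}$ is closed and convex, contains $0$, and satisfies $\mathcal{T}(\mathcal{D})\subseteq\mathcal{D}$, one has $\mathcal{W}\subseteq\mathcal{D}$. Conversely $\mathcal{W}$ is itself closed, convex and contains $0$, and $\mathcal{T}(\mathcal{W})\subseteq\mathcal{T}(\mathcal{D})\subseteq\mathcal{W}$, where the first inclusion uses $\mathcal{W}\subseteq\mathcal{D}$ and the second is the definition of $\mathcal{W}$; hence $\mathcal{W}\in\Sigma$ and $\mathcal{D}\subseteq\mathcal{W}$ by minimality. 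So $\mathcal{D}=\mathcal{W}$, i.e. $\mathcal{D}$ satisfies the left-hand side of the M\"{o}nch alternative in the statement, and the hypothesis then forces $\alpha(\mathcal{D})=0$. By property (1) of the Kuratowski measure of noncompactness the closed set $\mathcal{D}$ is compact.

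To conclude, $\mathcal{D}$ is a nonempty compact convex subset of $E$ on which $\mathcal{T}$ acts as a continuous self-map (by $\mathcal{T}(\mathcal{D})\subseteq\mathcal{D}$), so the classical Schauder fixed point theorem provides a fixed point of $\mathcal{T}$ in $\mathcal{D}\subseteq\mathbb{B}$. I expect the main obstacle to be the identity $\mathcal{D}=\overline{co}(\mathcal{T}(\mathcal{D})\cup\{0\})$: one must set up the minimal invariant set correctly and run the two-sided inclusion through the minimality argument, while keeping $0$ inside every set in sight so that $\mathcal{D}$ is nonempty and the M\"{o}nch condition genuinely applies to it. Once $\alpha(\mathcal{D})=0$ has been obtained, the remainder is a routine appeal to Schauder's theorem.
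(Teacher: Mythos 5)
The paper offers no proof of this lemma --- it is quoted from M\"onch's 1980 paper --- so your argument can only be measured against the standard proof of the result. On that score your construction is the classical one and is essentially sound: the family $\Sigma$ is nonempty, the intersection $\mathcal{D}$ is a nonempty closed convex $\mathcal{T}$-invariant set containing $0$, and your two-sided minimality argument for $\mathcal{D}=\overline{co}\bigl(\mathcal{T}(\mathcal{D})\cup\{0\}\bigr)$ is correct (the only unstated point is that $\mathcal{W}\subseteq\mathbb{B}$, which holds because $\mathbb{B}$ is closed, convex and contains $\mathcal{T}(\mathcal{D})\cup\{0\}$). Once $\alpha(\mathcal{D})=0$ is known, $\mathcal{D}$ is a nonempty compact convex set carried into itself by the continuous map $\mathcal{T}$, and Schauder finishes the proof. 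Notably, the identity you reach is exactly the form of the condition the authors themselves invoke later, in Step 4 of Theorem \ref{th8.1}, where they take $\mathcal{V}\subseteq\overline{co}\bigl(\mathcal{T}(\mathcal{V})\cup\{0\}\bigr)$.

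The one point you must not gloss over is that $\mathcal{D}=\overline{co}\bigl(\mathcal{T}(\mathcal{D})\cup\{0\}\bigr)$ is \emph{not} literally either antecedent of the implication as stated: those are $\mathcal{V}=\overline{co}\,\mathcal{T}(\mathcal{V})$ and $\mathcal{V}=\mathcal{T}(\mathcal{V})\cup\{0\}$. Your $\mathcal{D}$ need satisfy neither --- the first would force $0\in\overline{co}\,\mathcal{T}(\mathcal{D})$, which you have not established, and the second is implausible since $\mathcal{D}$ is closed and convex while $\mathcal{T}(\mathcal{D})\cup\{0\}$ need not be. Taken at face value, then, the hypothesis cannot be applied to $\mathcal{D}$ and your proof has a gap at precisely the step you identified as crucial. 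In substance this is a defect of the transcription of M\"onch's condition rather than of your argument (the standard hypothesis is $\mathcal{V}=\overline{co}\bigl(\mathcal{T}(\mathcal{V})\cup\{0\}\bigr)\Rightarrow\alpha(\mathcal{V})=0$, which is what the paper actually uses), but you should say so explicitly; the alternative of manufacturing a set satisfying one of the two stated disjuncts is not routine --- e.g.\ the iteration $V_{0}=\{0\}$, $V_{n+1}=\mathcal{T}(V_{n})\cup\{0\}$, $V=\bigcup_{n}V_{n}$ does give $V=\mathcal{T}(V)\cup\{0\}$ and hence $\overline{V}$ compact, but $\overline{co}(V)$ is then not known to be $\mathcal{T}$-invariant, so Schauder does not apply on it. Finally, be aware that M\"onch's theorem proper assumes the condition only for \emph{countable} $\mathcal{V}$; your argument applies it to the typically uncountable set $\mathcal{D}$, which is legitimate for the statement as given (it quantifies over all subsets of $\mathbb{B}$) but would not yield the sharper countable version.
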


\begin{lemma}\cite{SZ}
Let $\mathbb{B}$ be a bounded, closed and convex subset of a Banach
space $C(J_{1},E)$; $F$ is a continuous function on $J_{1}\times J_{1}$; and
a function $f:J_{1}\times E\longrightarrow E$ satisfying the Carath\'{e}%
odory conditions, and assume there exists $\rho \in $ $L^{P}(J_{1},%
\mathbb{R}
^{+})$ such that, for each $t\in J_{1}$ and each bounded set $\mathbb{B}%
^{\ast }\subset E$; one has%
\begin{equation*}
\underset{r\longrightarrow 0^{+}}{\lim }{\Large \alpha }(f(J_{t,r}\times
\mathbb{B}^{\ast }))\leq \rho (t){\Large \alpha }(\mathbb{B}^{\ast }),\text{%
where }J_{t,r}\in \lbrack t-r,t]\cap J_{1}.
\end{equation*}%
If $\mathcal{V}$ is an equicontinuous subset of $\mathbb{B}$; then%
\begin{equation*}
{\Large \alpha }\bigg(\bigg\{\int_{J_{1}}F(t,s)f(s,z(s))ds:z\in \mathcal{V%
\bigg\}\bigg)}\leq \int_{J_{1}}\left\Vert F(t,s)\right\Vert \rho (s){\Large %
\alpha }(\mathcal{V(}s\mathcal{)})ds.
\end{equation*}
\end{lemma}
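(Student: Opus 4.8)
The plan is to discretize the vector integral, estimate the Kuratowski measure block by block, and recognize the outcome as a Riemann-type sum. Fix $\varepsilon>0$. Since $F$ is continuous on the compact square $J_{1}\times J_{1}$ it is uniformly continuous there, and $\mathcal{V}$ is equicontinuous, so I may choose a partition $a=s_{0}<s_{1}<\dots <s_{n}=b$ of $J_{1}$, with $T_{i}=[s_{i-1},s_{i}]$, fine enough that $\|z(s)-z(s_{i})\|<\varepsilon $ for all $z\in \mathcal{V}$, $s\in T_{i}$, and $\|F(t,s)-F(t,s_{i})\|<\varepsilon $ for all $t\in J_{1}$, $s\in T_{i}$. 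Then every element of the set in question splits as $\int_{J_{1}}F(t,s)f(s,z(s))\,ds=\sum_{i=1}^{n}\int_{T_{i}}F(t,s)f(s,z(s))\,ds$, and the subadditivity of $\alpha $ recalled above reduces the task to bounding $\alpha \big(\big\{\int_{T_{i}}F(t,s)f(s,z(s))\,ds:z\in \mathcal{V}\big\}\big)$ for each fixed $i$.

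For a fixed block I would invoke the mean-value inclusion for Bochner integrals, $|T_{i}|^{-1}\int_{T_{i}}g(s)\,ds\in \overline{co}\,g(T_{i})$, to get
\[
\Big\{\int_{T_{i}}F(t,s)f(s,z(s))\,ds:z\in \mathcal{V}\Big\}\subset |T_{i}|\,\overline{co}\,\big\{F(t,s)f(s,z(s)):s\in T_{i},\ z\in \mathcal{V}\big\}.
\]
Using the invariance of $\alpha $ under closed convex hull and its positive homogeneity, pulling the factor $F(t,\cdot )$ out (its near-constancy on $T_{i}$ costs only a term that sums to $O(\varepsilon )$ over $i$), the monotonicity of $\alpha $, and the containment $\{f(s,z(s)):s\in T_{i},\,z\in \mathcal{V}\}\subset f(T_{i}\times \mathcal{V}(T_{i}))$ with $\mathcal{V}(T_{i})=\{z(s):z\in \mathcal{V},\,s\in T_{i}\}$, the $i$-th block contributes at most $|T_{i}|\,\|F(t,s_{i})\|\,\alpha \big(f(T_{i}\times \mathcal{V}(T_{i}))\big)$, up to the stated $O(\varepsilon )$ error. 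Because $T_{i}\subset \lbrack s_{i}-|T_{i}|,s_{i}]\cap J_{1}$, the hypothesis on the localized measure of $f$ yields $\alpha \big(f(T_{i}\times \mathcal{V}(T_{i}))\big)\leq \rho (s_{i})\,\alpha (\mathcal{V}(T_{i}))+\varepsilon $ once the mesh is small, and the equicontinuity of $\mathcal{V}$ gives $\alpha (\mathcal{V}(T_{i}))\leq \alpha (\mathcal{V}(s_{i}))+2\varepsilon $, since a ball of radius $\varepsilon $ has measure of noncompactness at most $2\varepsilon $ (directly from the definition of $\alpha $).

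Summing the blocks and then letting $\varepsilon \to 0$ while refining the partition, the upper bound becomes
\[
\alpha \Big(\Big\{\int_{J_{1}}F(t,s)f(s,z(s))\,ds:z\in \mathcal{V}\Big\}\Big)\leq \sum_{i=1}^{n}|T_{i}|\,\|F(t,s_{i})\|\,\rho (s_{i})\,\alpha (\mathcal{V}(s_{i}))+o(1).
\]
The last sum is a Riemann sum for $\int_{J_{1}}\|F(t,s)\|\,\rho (s)\,\alpha (\mathcal{V}(s))\,ds$: the map $s\mapsto \alpha (\mathcal{V}(s))$ is continuous, as it inherits the modulus of continuity of the equicontinuous family $\mathcal{V}$, the map $\|F(t,\cdot )\|$ is continuous, and $\rho \in L^{1}$; hence the sums converge to that integral and the claimed inequality follows.

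The delicate point, and the step I expect to be the main obstacle, is upgrading the pointwise-in-$t$ condition $\lim_{r\to 0^{+}}\alpha (f(J_{t,r}\times \mathbb{B}^{\ast }))\leq \rho (t)\,\alpha (\mathbb{B}^{\ast })$ to the working estimate $\alpha (f(T_{i}\times \mathcal{V}(T_{i})))\leq \rho (s_{i})\,\alpha (\mathcal{V}(s_{i}))+\varepsilon $ \emph{uniformly} over the finitely many $s_{i}\in J_{1}$, and, relatedly, justifying the convergence of the sums when $\rho $ is merely integrable rather than continuous. Controlling these is the genuine content of the cited result \cite{SZ}; it is handled by a Heinz-type integral inequality for the measure of noncompactness together with the absolute continuity of $A\mapsto \int_{A}\rho $ and a Lusin-type regularization of $\rho $ (with a compactness argument on the set where $\rho $ is continuous), rather than by the elementary manipulations above. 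One should also observe at the outset that, in computing $\alpha $, the family $\mathcal{V}$ may be replaced by a suitable countable subfamily without altering its value, which renders the separability and measurability issues throughout the argument harmless.
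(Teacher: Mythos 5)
This lemma is quoted from Szufla \cite{SZ} and the paper gives no proof of it, so there is nothing internal to compare your argument against; I can only assess the proposal on its own terms. Your overall strategy --- partition $J_{1}$, use subadditivity of $\alpha$, the mean-value inclusion $\int_{T_{i}}g\in |T_{i}|\,\overline{co}\,g(T_{i})$ for Bochner integrals, invariance of $\alpha$ under closed convex hulls, the localized $\alpha$-condition on $f$, and equicontinuity of $\mathcal{V}$ to pass from $\alpha(\mathcal{V}(T_{i}))$ to $\alpha(\mathcal{V}(s_{i}))$ --- is indeed the standard route to results of this type, and each of those individual manipulations is sound.

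However, the proof as written has two genuine gaps, both of which you flag yourself and neither of which you close, and they are precisely the content of the result. First, the hypothesis $\lim_{r\to 0^{+}}\alpha(f(J_{t,r}\times\mathbb{B}^{\ast}))\leq\rho(t)\alpha(\mathbb{B}^{\ast})$ produces, for each fixed $t$ and each fixed bounded set $\mathbb{B}^{\ast}$, a threshold $r_{0}=r_{0}(t,\mathbb{B}^{\ast},\varepsilon)$; in your argument both the evaluation points $s_{i}$ and the sets $\mathcal{V}(T_{i})$ vary with the partition, so no single mesh size is guaranteed to make all $n$ block estimates valid simultaneously --- the step ``once the mesh is small'' is not justified by the hypothesis alone. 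Second, since $\rho$ is only in $L^{p}$, the quantity $\sum_{i}|T_{i}|\,\|F(t,s_{i})\|\,\rho(s_{i})\,\alpha(\mathcal{V}(s_{i}))$ is \emph{not} a Riemann sum that converges to $\int_{J_{1}}\|F(t,s)\|\rho(s)\alpha(\mathcal{V}(s))\,ds$: point values of an $L^{p}$ function at partition nodes carry no information, and one can choose representatives and partitions making these sums converge to anything (or diverge). Repairing both defects requires the Lusin/Egorov regularization of $\rho$ and a Heinz-type integral inequality for $\alpha$ that you mention in your final paragraph but explicitly decline to carry out, conceding that the crux is ``handled \ldots rather than by the elementary manipulations above.'' As it stands, therefore, the proposal is a correct outline of the known proof architecture with the decisive analytic step left open, not a complete proof.
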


\begin{lemma}
\label{le}\cite{SPB} Let $\gamma =\mu +\nu -\mu \nu $ where $0<\mu <1$ and
$0\leq \nu \leq 1.$ Let $f:J_{2}\times E\rightarrow E$ be a function such
that $f(t,z)\in C_{1-\gamma }(J_{1},E)$ for any $z\in C_{1-\gamma
}(J_{1},E). $ If $z\in C_{1-\gamma }^{\gamma }(J_{1},E),$ then $z$ satisfies
IVP \eqref{e8.1a}-\eqref{e8.1b} if and only if $z$ satisfies the Volterra
integral equation
\begin{equation}
z(t)=\frac{z_{a}}{\Gamma (\gamma )}(t-a)^{\gamma -1}+\frac{1}{\Gamma (\mu )}%
\int_{a}^{t}(t-s)^{\mu -1}f(s,z(s))ds,\quad t>a.  \label{s3}
\end{equation}
\end{lemma}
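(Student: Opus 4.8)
The plan is to prove the two directions of the equivalence separately, letting the composition identities of Section~2 carry the argument. Throughout I would use the arithmetic forced by $\gamma=\mu+\nu-\mu\nu$: namely $\gamma=\mu+\nu(1-\mu)$, $(1-\nu)(1-\mu)=1-\gamma$, and $\nu(1-\mu)=\gamma-\mu$, so that the composite orders appearing along the way simplify to the ones in \eqref{s3} and \eqref{e8.1a}. Note that $z\in C_{1-\gamma}^{\gamma}(J_{1},E)$ forces $z\in C_{1-\gamma}(J_{1},E)$, and then the hypothesis on $f$ gives $f(\cdot,z(\cdot))\in C_{1-\gamma}(J_{1},E)$; both facts will be used in essentially every application of Lemmas \ref{def8.5}, \ref{Le1}, \ref{Le2}, \ref{def8.8} and \ref{def8.7} below.

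\emph{Necessity.} Suppose $z\in C_{1-\gamma}^{\gamma}(J_{1},E)$ solves the problem \eqref{e8.1a}--\eqref{e8.1b}, i.e.\ the equation $D_{a^{+}}^{\mu,\nu}z(t)=f(t,z(t))$ on $J_{2}$ together with the initial condition $I_{a^{+}}^{1-\gamma}z(a^{+})=z_{a}$. I would apply $I_{a^{+}}^{\mu}$ to the differential equation and invoke the first identity of Lemma \ref{Le2}, obtaining
\begin{equation*}
z(t)-\frac{I_{a^{+}}^{(1-\nu)(1-\mu)}z(a)}{\Gamma(\mu+\nu(1-\mu))}(t-a)^{\mu+\nu(1-\mu)-1}=I_{a^{+}}^{\mu}f(t,z(t)),\qquad t\in J_{2}.
\end{equation*}
Since $(1-\nu)(1-\mu)=1-\gamma$ and $\mu+\nu(1-\mu)=\gamma$, the power is $(t-a)^{\gamma-1}$ and its coefficient equals $I_{a^{+}}^{1-\gamma}z(a)/\Gamma(\gamma)=z_{a}/\Gamma(\gamma)$; rearranging produces precisely \eqref{s3}.

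\emph{Sufficiency.} Conversely, suppose $z\in C_{1-\gamma}^{\gamma}(J_{1},E)$ satisfies \eqref{s3}. First I would recover the initial condition: applying $I_{a^{+}}^{1-\gamma}$ to \eqref{s3}, Lemma \ref{Le1}(i) gives $I_{a^{+}}^{1-\gamma}(t-a)^{\gamma-1}=\Gamma(\gamma)$ and Lemma \ref{def8.5} turns the convolution term into $I_{a^{+}}^{1-\gamma+\mu}f(\cdot,z(\cdot))$, so that $I_{a^{+}}^{1-\gamma}z(t)=z_{a}+I_{a^{+}}^{1-\gamma+\mu}f(\cdot,z(\cdot))(t)$; since $f(\cdot,z(\cdot))\in C_{1-\gamma}$ and the order $1-\gamma+\mu$ strictly exceeds $1-\gamma$, this remainder tends to $0$ as $t\to a^{+}$ by Lemma \ref{def8.7}, whence $I_{a^{+}}^{1-\gamma}z(a^{+})=z_{a}$. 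Next I would recover the equation by applying $D_{a^{+}}^{\mu,\nu}=I_{a^{+}}^{\nu(1-\mu)}D_{a^{+}}^{\gamma}$ (Remark \ref{rem8.a}(i)) to \eqref{s3}: the power term vanishes because $D_{a^{+}}^{\gamma}(t-a)^{\gamma-1}=0$ (Lemma \ref{Le1}(ii)), and on the convolution term Lemma \ref{def8.8} gives $D_{a^{+}}^{\gamma}I_{a^{+}}^{\mu}f=D_{a^{+}}^{\nu(1-\mu)}f$, so that
\begin{equation*}
D_{a^{+}}^{\mu,\nu}z(t)=I_{a^{+}}^{\nu(1-\mu)}D_{a^{+}}^{\nu(1-\mu)}f(t,z(t)).
\end{equation*}
Writing $\beta=\nu(1-\mu)$ and applying the first identity of Lemma \ref{Le2} with the parameters $(\beta,0)$ in place of $(\mu,\nu)$ (so that $D_{a^{+}}^{\beta,0}=D_{a^{+}}^{\beta}$, and noting $C_{1-\gamma}\subset C_{1-\beta}$ because $\gamma\geq\beta$), the right-hand side equals $f(t,z(t))-\Gamma(\beta)^{-1}(I_{a^{+}}^{1-\beta}f)(a)\,(t-a)^{\beta-1}$; since $f(\cdot,z(\cdot))\in C_{1-\gamma}$ and $1-\beta=1-\gamma+\mu>1-\gamma$, Lemma \ref{def8.7} forces $(I_{a^{+}}^{1-\beta}f)(a)=0$, leaving $D_{a^{+}}^{\mu,\nu}z(t)=f(t,z(t))$ on $J_{2}$ (the case $\nu=0$, where $\beta=0$, being immediate from Lemma \ref{def8.5}). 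Together with the initial condition already recovered, $z$ solves \eqref{e8.1a}--\eqref{e8.1b}.

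The step I expect to require the most care is not the index arithmetic but the legitimacy of the operator manipulations in the sufficiency direction. Rewriting $D_{a^{+}}^{\gamma}I_{a^{+}}^{\mu}f$ as $D_{a^{+}}^{\nu(1-\mu)}f$ via Lemma \ref{def8.8}, and then identifying $I_{a^{+}}^{\nu(1-\mu)}D_{a^{+}}^{\nu(1-\mu)}f$ with $f$, presupposes that the relevant fractional derivatives exist and that $I_{a^{+}}^{\mu}f(\cdot,z(\cdot))$ carries the regularity demanded by those lemmas; this is exactly why the statement restricts to $z\in C_{1-\gamma}^{\gamma}(J_{1},E)$ and assumes $f(\cdot,z(\cdot))\in C_{1-\gamma}(J_{1},E)$ (so that $D_{a^{+}}^{\gamma}z$ exists in $C_{1-\gamma}(J_{1},E)$ and the Bochner integrals converge). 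One must also keep in mind that the Riemann--Liouville integral and derivative of equal order compose to the identity only in the order $D_{a^{+}}^{\sigma}I_{a^{+}}^{\sigma}g=g$, so the remainder terms appearing in Lemma \ref{Le2} and Lemma \ref{def8.7} genuinely have to be estimated rather than merely discarded. Once these points are settled, the proof reduces to the chain of identities displayed above.
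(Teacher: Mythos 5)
The paper does not actually prove this lemma---it is quoted from \cite{SPB}---but your argument is correct and uses exactly the technique the paper itself deploys in the converse direction of Lemma \ref{lee1}: recover the weighted initial value by applying $I_{a^{+}}^{1-\gamma}$ and Lemma \ref{def8.7}, and recover the equation by applying $D_{a^{+}}^{\gamma}$, reducing via Lemmas \ref{Le1} and \ref{def8.8} to $I_{a^{+}}^{\nu(1-\mu)}D_{a^{+}}^{\nu(1-\mu)}f$ and annihilating the remainder term of Lemma \ref{Le2} with Lemma \ref{def8.7}. Two of your silent corrections are in fact needed and consistent with the paper's own usage: reading \eqref{e8.1b} as the Cauchy-type condition $I_{a^{+}}^{1-\gamma}z(a^{+})=z_{a}$ (the cross-reference in the lemma statement points to the wrong display), and applying Lemma \ref{def8.7} with integration order strictly exceeding the weight, which is the correct form of that lemma despite the printed inequality $0<\mu\leq\gamma$.
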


\section{Main results}

Now we prove the existence of solution of HNBVP \eqref{e8.1}-\eqref{e8.2} in
$C_{1-\gamma }^{\gamma }(J_{1},E)\subset C_{1-\gamma }^{\mu ,\nu }(J_{1},E).$

\begin{definition}
A function $z\in $ $C_{1-\gamma }^{\gamma }(J_{1},E)$ is said to be a
solution of HNBVP \eqref{e8.1}-\eqref{e8.2} if $z$ satisfies the
differential equation $D_{a^{+}}^{\mu ,\nu }z(t)=f(t,z(t))$ on $(a,b]$, and
the nonlocal condition $\displaystyle I_{a^{+}}^{1-\gamma }\left[
cz(a^{+})+dz(b^{-})\right] =\sum_{k=1}^{m}\lambda _{k}z(\tau _{k}).$
\end{definition}

In the beginning, we need the following axiom lemma:

\begin{lemma}
\label{lee1} Let $0<\mu <1$, $0\leq \nu \leq 1$ where $\gamma =\mu +\nu -\mu
\nu $, and $f:J_{2}\times \mathbb{R}\rightarrow \mathbb{R}$ be a function
such that $f(t,z)\in C_{1-\gamma }(J_{1},E)$ for any $z\in C_{1-\gamma
}(J_{1},E).$ If $z\in C_{1-\gamma }^{\gamma }(J_{1},E),$ then $z$ satisfies
HNBVP \eqref{e8.1}-\eqref{e8.2} if and only if $z$ satisfies the following
integral equation%
\begin{eqnarray}
z(t) &=&\frac{(t-a)^{\gamma -1}}{\Gamma (\gamma )}\frac{1}{\left(
c+d-A\right) }\sum_{k=1}^{m}\frac{\lambda _{k}}{\Gamma (\mu )}\int_{a}^{\tau
_{k}}(\tau _{k}-s)^{\mu -1}f(s,z(s))ds  \notag \\
&&-\frac{(t-a)^{\gamma -1}}{\Gamma (\gamma )}\frac{d}{\left( c+d-A\right) }%
\frac{1}{\Gamma (1-\gamma +\mu )}\int_{a}^{b}(b-s)^{-\gamma +\mu }f(s,z(s))ds
\notag \\
&&+\frac{1}{\Gamma (\mu )}\int_{a}^{t}(t-s)^{\mu -1}f(s,z(s))ds,  \label{ee3}
\end{eqnarray}%
where $\displaystyle{A=\sum_{k=1}^{m}\lambda _{k}\frac{(\tau _{k}-a)^{\gamma
-1}}{\Gamma (\gamma )}}$, and $c+d\neq A$.
\end{lemma}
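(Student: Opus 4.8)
The plan is to establish the equivalence in both directions by reducing the boundary value problem to an initial value problem of the form already treated in Lemma~\ref{le}. First I would apply the operator $I_{a^{+}}^{\mu}$ to equation~\eqref{e8.1} and invoke Lemma~\ref{Le2}: since $z\in C_{1-\gamma}^{\gamma}(J_{1},E)$ and $f(\cdot,z(\cdot))\in C_{1-\gamma}(J_{1},E)$, this yields
\begin{equation*}
z(t)=\frac{I_{a^{+}}^{1-\gamma}z(a)}{\Gamma(\gamma)}(t-a)^{\gamma-1}+\frac{1}{\Gamma(\mu)}\int_{a}^{t}(t-s)^{\mu-1}f(s,z(s))\,ds,\qquad t\in(a,b].
\end{equation*}
Writing $z_{a}:=I_{a^{+}}^{1-\gamma}z(a)$, this is exactly the Volterra representation of Lemma~\ref{le}; so it remains to determine $z_{a}$ from the nonlocal condition~\eqref{e8.2} and to show that plugging this value in recovers~\eqref{ee3}.

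Next I would compute the three quantities appearing in~\eqref{e8.2} using the representation above. For $I_{a^{+}}^{1-\gamma}z(a^{+})$ and $I_{a^{+}}^{1-\gamma}z(b^{-})$, applying $I_{a^{+}}^{1-\gamma}$ termwise, using Lemma~\ref{Le1}(i) to get $I_{a^{+}}^{1-\gamma}(t-a)^{\gamma-1}=\Gamma(\gamma)$ (a constant) and the semigroup law $I_{a^{+}}^{1-\gamma}I_{a^{+}}^{\mu}=I_{a^{+}}^{1-\gamma+\mu}$ of Lemma~\ref{def8.5}, and then evaluating at $t=a$ (where Lemma~\ref{def8.7} kills the integral term) and at $t=b$. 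This gives $I_{a^{+}}^{1-\gamma}z(a^{+})=z_{a}$ and
\begin{equation*}
I_{a^{+}}^{1-\gamma}z(b^{-})=z_{a}+\frac{1}{\Gamma(1-\gamma+\mu)}\int_{a}^{b}(b-s)^{-\gamma+\mu}f(s,z(s))\,ds.
\end{equation*}
For the right-hand side, evaluating the Volterra formula at $\tau_{k}$ gives $z(\tau_{k})=\frac{z_{a}}{\Gamma(\gamma)}(\tau_{k}-a)^{\gamma-1}+\frac{1}{\Gamma(\mu)}\int_{a}^{\tau_{k}}(\tau_{k}-s)^{\mu-1}f(s,z(s))\,ds$, so $\sum_{k}\lambda_{k}z(\tau_{k})=z_{a}A+\sum_{k}\frac{\lambda_{k}}{\Gamma(\mu)}\int_{a}^{\tau_{k}}(\tau_{k}-s)^{\mu-1}f(s,z(s))\,ds$ with $A$ as defined. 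Substituting all of this into~\eqref{e8.2} produces a linear equation $z_{a}(c+d)+(\text{integral terms})=z_{a}A+(\text{integral terms})$, which I would solve for $z_{a}$ using the hypothesis $c+d\neq A$; inserting the resulting $z_{a}$ back into the Volterra formula yields precisely~\eqref{ee3}. This proves the "only if" direction.

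For the converse, I would start from~\eqref{ee3}, observe that it has the shape $z(t)=\frac{z_{a}}{\Gamma(\gamma)}(t-a)^{\gamma-1}+I_{a^{+}}^{\mu}f(\cdot,z(\cdot))(t)$ with $z_{a}$ equal to the constant assembled from the first two lines times $\Gamma(\gamma)$, hence $z\in C_{1-\gamma}(J_{1},E)$; applying $D_{a^{+}}^{\mu,\nu}$, using Lemma~\ref{Le1}(ii) together with the factorization $D_{a^{+}}^{\mu,\nu}=I_{a^{+}}^{\nu(1-\mu)}D^{\,\gamma}$ to annihilate the power term (so $D_{a^{+}}^{\mu,\nu}(t-a)^{\gamma-1}=0$) and $D_{a^{+}}^{\mu,\nu}I_{a^{+}}^{\mu}=D_{a^{+}}^{\nu(1-\mu)}I_{a^{+}}^{\nu(1-\mu)}=\mathrm{Id}$ (Lemma~\ref{def8.5} applied to the inner operators, or Lemma~\ref{def8.8}) recovers~\eqref{e8.1}, and in particular shows $D_{a^{+}}^{\mu,\nu}z\in C_{1-\gamma}(J_{1},E)$ so that $z\in C_{1-\gamma}^{\mu,\nu}(J_{1},E)$. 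Then running the computation of the previous paragraph in reverse — evaluating $I_{a^{+}}^{1-\gamma}z(a^{+})$, $I_{a^{+}}^{1-\gamma}z(b^{-})$ and $\sum_{k}\lambda_{k}z(\tau_{k})$ from~\eqref{ee3} — verifies that~\eqref{e8.2} holds, because $z_{a}$ was chosen precisely to make the linear identity balance.

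I expect the main obstacle to be bookkeeping rather than conceptual: carefully tracking the constant $z_{a}$ through the substitution, making sure the factors $\Gamma(\gamma)$, $(t-a)^{\gamma-1}$ and the coefficient $1/(c+d-A)$ combine correctly, and confirming that all integral terms are well-defined in $C_{1-\gamma}$ (which needs $\gamma-\mu<1$, i.e. $-\gamma+\mu>-1$, so that $(b-s)^{-\gamma+\mu}$ is integrable — this follows from $\mu\le\gamma<1$). A secondary technical point is justifying the interchange of $I_{a^{+}}^{1-\gamma}$ with the finite sum and with $I_{a^{+}}^{\mu}$ and the evaluation of the limits at the endpoints, all of which are covered by Lemmas~\ref{def8.5}, \ref{Le1}, \ref{Le2} and \ref{def8.7}; none of these should cause real trouble.
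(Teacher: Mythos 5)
Your proposal is correct and follows essentially the same route as the paper: reduce to the Volterra representation of Lemma~\ref{le}, compute $I_{a^{+}}^{1-\gamma}z(a^{+})$, $I_{a^{+}}^{1-\gamma}z(b^{-})$ and $\sum_{k}\lambda_{k}z(\tau_{k})$, solve the resulting linear equation using $c+d\neq A$, and check the converse by differentiating \eqref{ee3} and re-evaluating the boundary functional. The only point to tighten is your claim $D_{a^{+}}^{\mu ,\nu }I_{a^{+}}^{\mu }=D_{a^{+}}^{\nu (1-\mu )}I_{a^{+}}^{\nu (1-\mu )}=\mathrm{Id}$: the composition actually unwinds to $I_{a^{+}}^{\nu (1-\mu )}D_{a^{+}}^{\nu (1-\mu )}$ (integral on the outside), which is the identity only after the boundary term $I_{a^{+}}^{1-\nu (1-\mu )}f\big(a,z(a)\big)$ is shown to vanish --- precisely the Lemma~\ref{Le2}/Lemma~\ref{def8.7} step that the paper carries out and that your final paragraph gestures at.
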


Proof: \ In view of Lemma \ref{le}, the solution of \eqref{e8.1} can be
written as%
\begin{equation}
z(t)=\frac{I_{a^{+}}^{1-\gamma }z(a^{+})}{\Gamma (\gamma )}(t-a)^{\gamma -1}+%
\frac{1}{\Gamma (\mu )}\int_{a}^{t}(t-s)^{\mu -1}f(s,z(s))ds,\quad t>a.
\label{e8.3}
\end{equation}

Applying $I_{a^{+}}^{1-\gamma }$ on both sides of \eqref{e8.3} and taking
the limit $t\rightarrow b^{-}$, we obtain
\begin{equation}
I_{a^{+}}^{1-\gamma }z(b^{-})=I_{a^{+}}^{1-\gamma }z(a^{+})+\frac{1}{\Gamma
(1-\gamma +\mu )}\int_{a}^{b}(b-s)^{-\gamma +\mu }f(s,z(s))ds.  \label{e8.4}
\end{equation}%
Now, we substitute $t=\tau _{k}$ in (\ref{e8.3}) and multiply by $\lambda
_{k}$ to obtain%
\begin{equation}
\lambda _{k}z(\tau _{k})=\lambda _{k}\left[ \frac{I_{a^{+}}^{1-\gamma
}z(a^{+})}{\Gamma (\gamma )}(\tau _{k}-a)^{\gamma -1}+\frac{1}{\Gamma (\mu )}%
\int_{a}^{\tau _{k}}(\tau _{k}-s)^{\mu -1}f(s,z(s))ds\right] .  \label{e8.3a}
\end{equation}%
Using the nonlocal boundary condition (\ref{e8.2}) with (\ref{e8.4}) and (%
\ref{e8.3a}), we have%
\begin{eqnarray*}
I_{a^{+}}^{1-\gamma }z(a^{+}) &=&\frac{1}{c}\sum_{k=1}^{m}\lambda _{k}z(\tau
_{k})-\frac{d}{c}I_{a^{+}}^{1-\gamma }z(a^{+}) \\
&&+\frac{d}{c\Gamma (1-\gamma +\mu )}\int_{a}^{b}(b-s)^{-\gamma +\mu
}f(s,z(s))ds.
\end{eqnarray*}%
Therefore, by (\ref{e8.3a}), we have
\begin{eqnarray}
I_{a^{+}}^{1-\gamma }z(a^{+}) &=&\frac{1}{c}\sum_{k=1}^{m}\lambda _{k}\frac{%
I_{a^{+}}^{1-\gamma }z(a^{+})}{\Gamma (\gamma )}(\tau _{k}-a)^{\gamma -1}
\notag \\
&&+\frac{1}{c}\sum_{k=1}^{m}\frac{\lambda _{k}}{\Gamma (\mu )}\int_{a}^{\tau
_{k}}(\tau _{k}-s)^{\mu -1}f(s,z(s))ds  \notag \\
&&-\frac{d}{c}I_{a^{+}}^{1-\gamma }z(a^{+})-\frac{d}{c}\frac{1}{\Gamma
(1-\gamma +\mu )}\int_{a}^{b}(b-s)^{-\gamma +\mu }f(s,z(s))ds.  \notag \\
&=&\frac{1}{\left( c+d-A\right) }\sum_{k=1}^{m}\frac{\lambda _{k}}{\Gamma
(\mu )}\int_{a}^{\tau _{k}}(\tau _{k}-s)^{\mu -1}f(s,z(s))ds  \notag \\
&&-\frac{d}{\left( c+d-A\right) }\frac{1}{\Gamma (1-\gamma +\mu )}%
\int_{a}^{b}(b-s)^{-\gamma +\mu }f(s,z(s))ds,  \label{t1}
\end{eqnarray}%
Submitting \eqref{t1} into \eqref{e8.3}, we obtain%
\begin{eqnarray}
z(t) &=&\frac{(t-a)^{\gamma -1}}{\Gamma (\gamma )}\frac{1}{\left(
c+d-A\right) }\sum_{k=1}^{m}\frac{\lambda _{k}}{\Gamma (\mu )}\int_{a}^{\tau
_{k}}(\tau _{k}-s)^{\mu -1}f(s,z(s))ds  \notag \\
&&-\frac{(t-a)^{\gamma -1}}{\Gamma (\gamma )}\frac{d}{\left( c+d-A\right) }%
\frac{1}{\Gamma (1-\gamma +\mu )}\int_{a}^{b}(b-s)^{-\gamma +\mu }f(s,z(s))ds
\notag \\
&&+\frac{1}{\Gamma (\mu )}\int_{a}^{t}(t-s)^{\mu -1}f(s,z(s))ds.  \label{E5}
\end{eqnarray}

Conversely, applying $I_{a^{+}}^{1-\gamma }$ on both sides of \eqref{ee3},
using Lemma \ref{def8.5} and \ref{Le1}, some simple computations gives
\begin{eqnarray*}
&&I_{a^{+}}^{1-\gamma }\big(cz(a^{+})+dz(b^{-})\big) \\
&=&\frac{c}{\left( c+d-A\right) }\sum_{k=1}^{m}\frac{\lambda _{k}}{\Gamma
(\mu )}\int_{a}^{\tau _{k}}(\tau _{k}-s)^{\mu -1}f(s,z(s))ds \\
&&-\frac{cd}{\left( c+d-A\right) }\frac{1}{\Gamma (1-\gamma +\mu )}%
\int_{a}^{b}(b-s)^{-\gamma +\mu }f(s,z(s))ds \\
&&+\frac{d}{\left( c+d-A\right) }\sum_{k=1}^{m}\frac{\lambda _{k}}{\Gamma
(\mu )}\int_{a}^{\tau _{k}}(\tau _{k}-s)^{\mu -1}f(s,z(s))ds \\
&&-\frac{d^{2}}{\left( c+d-A\right) }\frac{1}{\Gamma (1-\gamma +\mu )}%
\int_{a}^{b}(b-s)^{-\gamma +\mu }f(s,z(s))ds \\
&&+\frac{d}{\Gamma (1-\gamma +\mu )}\int_{a}^{b}(b-s)^{-\gamma +\mu
}f(s,z(s))ds.
\end{eqnarray*}%
\begin{eqnarray*}
&&I_{a^{+}}^{1-\gamma }\big(cz(a^{+})+dz(b^{-})\big) \\
&=&\left( \frac{c}{\left( c+d-A\right) }+\frac{d}{\left( c+d-A\right) }%
\right) \sum_{k=1}^{m}\frac{\lambda _{k}}{\Gamma (\mu )}\int_{a}^{\tau
_{k}}(\tau _{k}-s)^{\mu -1}f(s,z(s))ds \\
&&-\left( d-\frac{cd}{\left( c+d-A\right) }-\frac{d^{2}}{\left( c+d-A\right)
}\right) \int_{a}^{b}\frac{(b-s)^{-\gamma +\mu}}{\Gamma (1-\gamma +\mu )}f(s,z(s))ds \\
&=&\frac{c+d}{\left( c+d-A\right) }\sum_{k=1}^{m}\frac{\lambda _{k}}{\Gamma
(\mu )}\int_{a}^{\tau _{k}}(\tau _{k}-s)^{\mu -1}f(s,z(s))ds \\
&&-\frac{Ad}{\left( c+d-A\right) }\frac{1}{\Gamma (1-\gamma +\mu )}%
\int_{a}^{b}(b-s)^{-\gamma +\mu }f(s,z(s))ds
\end{eqnarray*}%
From (\ref{e8.3a}) and (\ref{t1}), we conclude that%
\begin{equation*}
I_{a^{+}}^{1-\gamma }\big(cz(a^{+})+dz(b^{-})\big)=\sum_{k=1}^{m}\lambda
_{k}z(\tau _{k}),
\end{equation*}%
which shows that the boundary condition (\ref{e8.2}) is satisfied. \newline

Next, applying $D_{a^{+}}^{\gamma }$ on both sides of \eqref{ee3} and using
Lemma \ref{Le1} and \ref{def8.8}, we have
\begin{equation}
D_{a^{+}}^{\gamma }z(t)=D_{a^{+}}^{\nu (1-\mu )}f\big(t,z(t)\big).
\label{e8.9}
\end{equation}

Since $z\in C_{1-\gamma }^{\gamma }(J_{1},E)$ and by definition of $%
C_{1-\gamma }^{\gamma }(J_{1},E)$, we have $D_{a^{+}}^{\gamma }z\in
C_{1-\gamma }(J_{1},E)$, therefore, $D_{a^{+}}^{\nu (1-\mu
)}f=DI_{a^{+}}^{1-\nu (1-\mu )}f\in C_{1-\gamma }(J_{1},E).$ For $f\in
C_{1-\gamma }(J_{1},E)$, it is clear that $I_{a^{+}}^{1-\nu (1-\mu )}f\in
C_{1-\gamma }(J_{1},E)$. Hence $f$ and $I_{a^{+}}^{1-\nu (1-\mu )}f$ satisfy
the hypothesis of Lemma \ref{Le2}.

Now, applying $I_{a^{+}}^{\nu (1-\mu )}$ on both sides of \eqref{e8.9}, we
have%
\begin{equation*}
{\large I_{a^{+}}^{\nu (1-\mu )}}D_{a^{+}}^{\gamma }z(t)={\large %
I_{a^{+}}^{\nu (1-\mu )}}D_{a^{+}}^{\nu (1-\mu )}f\big(t,z(t)\big).
\end{equation*}%
Using Remark~\ref{rem8.a} (i), relation (\ref{e8.9}) and Lemma \ref{Le2}, we
get%
\begin{equation*}
I_{a^{+}}^{\gamma }D_{a^{+}}^{\gamma }z(t)=f\big(t,z(t)\big)-\frac{%
I_{a^{+}}^{1-\nu (1-\mu )}f\big(a,z(a)\big)}{\Gamma (\nu (1-\mu ))}%
(t-a)^{\nu (1-\mu )-1},\, \text{for all}\,\, t\in J_{2}.
\end{equation*}%
\ By Lemma \ref{def8.7}, we have $I_{a^{+}}^{1-\nu (1-\mu )}f\big(a,z(a)\big)%
=0$. Therefore $D_{a^{+}}^{\mu ,\nu }z(t)=f\big(t,z(t)\big)$. This completes
the proof.

To prove the existence of solutions for the problem at hand, let us make the
following hypotheses.

\begin{itemize}
\item[ (H1)] The function $f:J_{2}\times E\rightarrow E$ satisfies the Carath%
\`{e}odory conditions.

\item[ (H2)] $f:J_{2}\times E\rightarrow E$ is a function such that $f(\cdot
,z(\cdot ))\in C_{1-\gamma }^{\nu (1-\mu )}(J_{1},E)$ for any $z\in
C_{1-\gamma }(J_{1},E)$ and there exists $\rho \in L^{p}(J_{1},%
\mathbb{R}
^{+})$ with $p>\frac{1}{\mu }$ and $p>\frac{1}{\gamma }$ such that%
\begin{equation*}
\left\Vert f\big(t,z\big)\right\Vert \leq \rho (t)\left\Vert z(t)\right\Vert %
\big),
\end{equation*}%
for each $t\in J_{2},$ and all $z\in E.$

\item[ (H3)] The inequalities%
\begin{eqnarray*}
\mathcal{G} &:&\mathcal{=}\big(\frac{1}{\Gamma (\gamma )}\frac{\left(
\Lambda _{q,\mu ,\gamma }\right) ^{\frac{1}{q}}}{\left( c+d-A\right) }%
\sum_{k=1}^{m}\frac{\lambda _{k}}{\Gamma (\mu )}(\tau _{k}-a)^{\gamma +\mu
-1}  \notag \\
&&+\big)\frac{1}{\Gamma (\gamma )}\left\vert \frac{d}{\left( c+d-A\right) }%
\right\vert \frac{\left( \Delta _{q,\mu ,\gamma }\right) ^{\frac{1}{q}}}{%
\Gamma (1-\gamma +\mu )}+\frac{\left( \Lambda _{q,\mu ,\gamma }\right) ^{%
\frac{1}{q}}}{\Gamma (\mu )}\big)(b-a)^{\mu }\big)\left\Vert \rho
\right\Vert _{L^{p}}<1,
\end{eqnarray*}%
and%
\begin{eqnarray*}
L^{\ast } &:&=\big(\frac{m}{\Gamma (\gamma )}\frac{(b-a)^{\gamma -1}}{\left(
c+d-A\right) }\sum_{k=1}^{m}\frac{\lambda _{k}(\tau _{k}-a)^{\mu }}{\Gamma
(\mu +1)}  \notag \\
&&+\big(\frac{1}{\Gamma (\gamma )}\left\vert \frac{d}{\left( c+d-A\right) }%
\right\vert \frac{1}{\Gamma (-\gamma +\mu )}+\frac{1}{\Gamma (\mu +1)}\big)%
(b-a)^{\mu }\big)\left\Vert \rho \right\Vert _{L^{p}}<1
\end{eqnarray*}%
hold.
\end{itemize}

Now, we are ready to prove the existence of solutions for the HNBVP %
\eqref{e8.1}-\eqref{e8.2}, which is based on fixed point theorem of M\"{o}%
nch's type.

\begin{theorem}
\label{th8.1} Assume that (H1)-(H3) are satisfied. Then HNBVP \eqref{e8.1}-%
\eqref{e8.2} has at least one solution in $C_{1-\gamma }^{\gamma
}(J_{1},E)\subset C_{1-\gamma }^{\mu ,\nu }(J_{1},E)$.
\end{theorem}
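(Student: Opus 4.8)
The plan is to realize solutions of the HNBVP as fixed points of the integral operator furnished by Lemma~\ref{lee1} and then to invoke M\"{o}nch's fixed point theorem (the Lemma cited from \cite{MH}). Define $\mathcal{T}:C_{1-\gamma}(J_{1},E)\to C_{1-\gamma}(J_{1},E)$ by letting $\mathcal{T}z(t)$ be the right-hand side of \eqref{ee3}. By Lemma~\ref{lee1} together with its converse computation, a function $z\in C_{1-\gamma}^{\gamma}(J_{1},E)$ solves \eqref{e8.1}--\eqref{e8.2} if and only if $\mathcal{T}z=z$; thus it suffices to produce a fixed point of $\mathcal{T}$ and to check afterwards that it lies in $C_{1-\gamma}^{\gamma}(J_{1},E)$. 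First I would show that $\mathcal{T}$ is well defined and maps a closed ball $\mathbb{B}=\{z\in C_{1-\gamma}(J_{1},E):\|z\|_{C_{1-\gamma}}\le R\}$ into itself. For this one multiplies $\mathcal{T}z(t)$ by the weight $(t-a)^{1-\gamma}$, uses (H2) to bound $\|f(s,z(s))\|$ by $\rho(s)$ times $(s-a)^{\gamma-1}$ times a quantity controlled by $\|z\|_{C_{1-\gamma}}$, and applies H\"{o}lder's inequality with the conjugate exponent $q$ of $p$; the restrictions $p>1/\mu$ and $p>1/\gamma$ make the Beta-type integrals $\Lambda_{q,\mu,\gamma}$ and $\Delta_{q,\mu,\gamma}$ finite, and collecting the three terms of \eqref{ee3} produces precisely the constant $\mathcal{G}$ of (H3). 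Since $\mathcal{G}<1$, a suitable $R$ with $\mathcal{T}(\mathbb{B})\subseteq\mathbb{B}$ exists; moreover $\mathbb{B}$ is bounded, closed, convex and contains $0$.

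Next I would establish the two properties needed by M\"{o}nch's theorem. Continuity of $\mathcal{T}$ on $\mathbb{B}$ follows by applying the Carath\'{e}odory condition (H1), the dominating function $\rho$ of (H2), and the dominated convergence theorem termwise to \eqref{ee3}. For equicontinuity, given a bounded $\mathcal{V}\subseteq\mathbb{B}$ I would show that the family $\{t\mapsto(t-a)^{1-\gamma}\mathcal{T}z(t):z\in\mathcal{V}\}$ is equicontinuous on $[a,b]$: the first two terms of \eqref{ee3} carry the explicit factor $(t-a)^{\gamma-1}$, hence become constant after weighting, whereas for the Volterra term $\frac{1}{\Gamma(\mu)}\int_{a}^{t}(t-s)^{\mu-1}f(s,z(s))\,ds$ one splits the integral at $t_{1}$ for $a\le t_{1}<t_{2}\le b$ and uses H\"{o}lder together with the finiteness of $\Lambda_{q,\mu,\gamma}$ to bound the increment uniformly in $z$ and to drive it to $0$ as $t_{2}-t_{1}\to 0$.

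Finally, let $\mathcal{V}\subseteq\mathbb{B}$ satisfy $\mathcal{V}=\overline{co}\,\mathcal{T}(\mathcal{V})$ or $\mathcal{V}=\mathcal{T}(\mathcal{V})\cup\{0\}$. Then $\mathcal{V}$ is equicontinuous, being the closed convex hull (respectively the union with $\{0\}$) of the equicontinuous set $\mathcal{T}(\mathcal{V})$, so the Kuratowski measure $\alpha(\mathcal{V})$ in $C_{1-\gamma}(J_{1},E)$ is governed by $\sup_{t}\alpha\big(\{(t-a)^{1-\gamma}z(t):z\in\mathcal{V}\}\big)$. Applying the Lemma from \cite{SZ} to each of the three integral terms of $\mathcal{T}$, with $F(t,s)$ the associated weighted kernels, bounding $\alpha(\mathcal{V}(s))$ by $(s-a)^{\gamma-1}$ times the weighted measure of $\mathcal{V}$, and estimating the kernels by H\"{o}lder exactly as in the first step, one arrives at $\alpha(\mathcal{T}(\mathcal{V}))\le\mathcal{G}\,\alpha(\mathcal{V})$ (and likewise $\le L^{\ast}\,\alpha(\mathcal{V})$ from the second inequality of (H3)). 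Because $\alpha(\mathcal{V})=\alpha(\overline{co}\,\mathcal{T}(\mathcal{V}))=\alpha(\mathcal{T}(\mathcal{V}))$ and $\mathcal{G}<1$, this forces $\alpha(\mathcal{V})=0$, i.e. $\mathcal{V}$ is relatively compact. M\"{o}nch's theorem then supplies a fixed point $z^{\ast}\in\mathbb{B}$ of $\mathcal{T}$; repeating the converse argument of Lemma~\ref{lee1} --- applying $D_{a^{+}}^{\gamma}$ to $z^{\ast}=\mathcal{T}z^{\ast}$ and using Lemmas~\ref{Le1}, \ref{def8.8}, \ref{Le2} and \ref{def8.7} together with (H2) --- shows $z^{\ast}\in C_{1-\gamma}^{\gamma}(J_{1},E)\subset C_{1-\gamma}^{\mu,\nu}(J_{1},E)$, so $z^{\ast}$ is a solution of the HNBVP.

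The main obstacle is the bookkeeping in the weighted space. In both the self-map estimate and the noncompactness estimate one must carry the weight $(t-a)^{1-\gamma}$ through the singular Volterra kernel, replace $\|z(s)\|$ (respectively $\alpha(\mathcal{V}(s))$) by $(s-a)^{\gamma-1}$ times the corresponding weighted quantity, and then distribute the H\"{o}lder exponents so that the surviving power of $(t-a)$ is nonnegative on $[a,b]$ and the accumulated constant is exactly $\mathcal{G}$ (or $L^{\ast}$), not something larger; establishing equicontinuity of the Volterra term uniformly up to the endpoint $t=a$ is the other delicate point.
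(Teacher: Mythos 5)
Your proposal is correct and follows essentially the same route as the paper: define $\mathcal{T}$ from the integral equation of Lemma~\ref{lee1}, show $\mathcal{T}(\mathbb{B}_{R})\subset\mathbb{B}_{R}$ via H\"{o}lder's inequality and (H3), establish continuity and equicontinuity of the weighted family, verify the M\"{o}nch condition through the Kuratowski measure and the Szufla-type estimate, and invoke M\"{o}nch's fixed point theorem. The only (welcome) addition is your explicit final check that the fixed point lies in $C_{1-\gamma}^{\gamma}(J_{1},E)$, a point the paper leaves implicit in the converse part of Lemma~\ref{lee1}.
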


\begin{proof}
Transform the problem \eqref{e8.1}-\eqref{e8.2} into a fixed point problem.
Define the operator ${\large \mathcal{T}}:C_{1-\gamma
}(J_{1},E)\longrightarrow C_{1-\gamma }(J_{1},E)$ as%
\begin{eqnarray}
{\large \mathcal{T}}z(t) &=&\frac{(t-a)^{\gamma -1}}{\Gamma (\gamma )}\frac{1%
}{\left( c+d-A\right) }\sum_{k=1}^{m}\frac{\lambda _{k}}{\Gamma (\mu )}%
\int_{a}^{\tau _{k}}(\tau _{k}-s)^{\mu -1}f(s,z(s))ds  \notag \\
&&-\frac{(t-a)^{\gamma -1}}{\Gamma (\gamma )}\frac{d}{\left( c+d-A\right) }%
\frac{1}{\Gamma (1-\gamma +\mu )}\int_{a}^{b}(b-s)^{-\gamma +\mu }f(s,z(s))ds
\notag \\
&&+\frac{1}{\Gamma (\mu )}\int_{a}^{t}(t-s)^{\mu -1}f(s,z(s))ds.
\label{e8.10}
\end{eqnarray}%
Clearly, from Lemma \ref{lee1}, the fixed points of ${\large \mathcal{T}}$
are solutions to \eqref{e8.1}-\eqref{e8.2}. Let $\mathbb{B}_{R}=\left\{ z\in
C_{1-\gamma }(J_{1},E):\left\Vert z\right\Vert _{C_{1-\gamma }}\leq
R\right\} $. We shall show that $\mathcal{T}$ satisfies the conditions of M%
\"{o}nch's fixed point theorem. The proof will be given in the following
four steps:

Step1: We show that $\mathcal{T}(\mathbb{B}_{R})\subset \mathbb{B}_{R}$. By
definition of $\mathcal{T}$, hypothesis $(H_{2})$ and H\"{o}lder's
inequality, we have%
\begin{eqnarray}
&&\left\Vert (\mathcal{T}z)(t)(t-a)^{1-\gamma }\right\Vert   \notag \\
&=&\frac{1}{\Gamma (\gamma )}\frac{1}{\left( c+d-A\right) }\sum_{k=1}^{m}%
\frac{\lambda _{k}}{\Gamma (\mu )}\int_{a}^{\tau _{k}}(\tau _{k}-s)^{\mu
-1}\left\Vert f(s,z(s))\right\Vert ds  \notag \\
&&+\frac{1}{\Gamma (\gamma )}\left\vert \frac{d}{\left( c+d-A\right) }%
\right\vert \frac{1}{\Gamma (1-\gamma +\mu )}\int_{a}^{b}(b-s)^{-\gamma +\mu
}\left\Vert f(s,z(s))\right\Vert ds  \notag \\
&&+\frac{(t-a)^{1-\gamma }}{\Gamma (\mu )}\int_{a}^{t}(t-s)^{\mu
-1}\left\Vert f(s,z(s))\right\Vert ds  \notag \\
&\leq &\frac{1}{\Gamma (\gamma )}\frac{1}{\left( c+d-A\right) }\sum_{k=1}^{m}%
\frac{\lambda _{k}}{\Gamma (\mu )}\int_{a}^{\tau _{k}}(\tau _{k}-s)^{\mu
-1}(s-a)^{\gamma -1}\rho (s)\left\Vert z\right\Vert _{C_{1-\gamma }}ds
\notag \\
&&+\frac{1}{\Gamma (\gamma )}\left\vert \frac{d}{\left( c+d-A\right) }%
\right\vert \int_{a}^{b}\frac{(b-s)^{-\gamma +\mu}}{\Gamma (1-\gamma +\mu )}
(s-a)^{\gamma -1}\rho (s)\left\Vert z\right\Vert _{C_{1-\gamma }}ds  \notag
\\
&&+\frac{(t-a)^{1-\gamma }}{\Gamma (\mu )}\int_{a}^{t}(t-s)^{\mu
-1}(s-a)^{\gamma -1}\rho (s)\left\Vert z\right\Vert _{C_{1-\gamma }}ds
\notag \\
&\leq &\frac{1}{\Gamma (\gamma )}\sum_{k=1}^{m}%
\frac{\lambda _{k}}{\Gamma (\mu )}\left( \int_{a}^{\tau _{k}}\frac{(\tau_{k}-s)^{(\mu -1)q}}
{\left( c+d-A\right)}(s-a)^{(\gamma -1)q}ds\right) ^{\frac{1}{q}}\left\Vert
\rho \right\Vert _{L^{p}}\left\Vert z\right\Vert _{C_{1-\gamma }}  \notag \\
&&+\frac{1}{\Gamma (\gamma )}\left\vert \frac{d}{\left( c+d-A\right) }%
\right\vert \left(\int_{a}^{b}\frac{(b-s)^{(-\gamma +\mu )q}}{\Gamma (1-\gamma +\mu )}(s-a)^{(\gamma -1)q}ds\right) ^{\frac{1}{%
q}}  \notag \\
&&\times \left\Vert \rho \right\Vert _{L^{p}}\left\Vert z\right\Vert
_{C_{1-\gamma }}+\frac{(t-a)^{1-\gamma }}{\Gamma (\mu )}  \notag \\
&&\times \left( \int_{a}^{t}(t-s)^{(\mu -1)q}(s-a)^{(\gamma -1)q}ds\right) ^{%
\frac{1}{q}}\left\Vert \rho \right\Vert _{L^{p}}\left\Vert z\right\Vert
_{C_{1-\gamma }}.  \label{q1}
\end{eqnarray}%
Since $q>1,$ $p>\frac{1}{\mu }$ and $\frac{1}{p}+\frac{1}{q}=1,$ the change
of variable $s=a-u(\tau _{k}-a)$ yields
\begin{equation}
\left( \int_{a}^{\tau _{k}}(\tau _{k}-s)^{(\mu -1)q}(s-a)^{(\gamma
-1)q}ds\right) ^{\frac{1}{q}}\leq \left( \Lambda _{q,\mu ,\gamma }\right) ^{%
\frac{1}{q}}(\tau _{k}-a)^{\gamma +\mu -1},  \label{e1}
\end{equation}%
the change of variable $s=a-u(b-a)$ gives
\begin{equation}
\left( \int_{a}^{b}(b-s)^{(-\gamma +\mu )q}(s-a)^{(\gamma -1)q}ds\right) ^{%
\frac{1}{q}}\leq \left( \Delta _{q,\mu ,\gamma }\right) ^{\frac{1}{q}%
}(b-a)^{\mu },  \label{e2}
\end{equation}%
and the change of variable $s=a-u(t-a)$ gives us
\begin{equation}
\left( \int_{a}^{t}(t-s)^{(\mu -1)q}(s-a)^{(\gamma -1)q}ds\right) ^{\frac{1}{%
q}}\leq \left( \Lambda _{q,\mu ,\gamma }\right) ^{\frac{1}{q}}(t-a)^{\gamma
+\mu -1},  \label{e3}
\end{equation}%
where
\begin{equation*}
\Lambda _{q,\mu ,\gamma }:=\frac{\Gamma (q(\mu -1)+1)\Gamma (q(\gamma -1)+1)%
}{\Gamma (q(\mu +\gamma -2)+2)},
\end{equation*}%
and
\begin{equation*}
\Delta _{q,\mu ,\gamma }:=\frac{\Gamma (q(\mu -\gamma )+1)\Gamma (q(\gamma
-1)+1)}{\Gamma (q(\mu -1)+2)}.
\end{equation*}

Substitution of (\ref{e1}),(\ref{e2}) and (\ref{e3}) into (\ref{q1}) leads%
\begin{eqnarray*}
&&\left\Vert (\mathcal{T}z)(t)(t-a)^{1-\gamma }\right\Vert \\
&\leq &\frac{1}{\Gamma (\gamma )}\frac{1}{\left( c+d-A\right) }\sum_{k=1}^{m}%
\frac{\lambda _{k}}{\Gamma (\mu )}\left( \Lambda _{q,\mu ,\gamma }\right) ^{%
\frac{1}{q}}(\tau _{k}-a)^{\gamma +\mu -1}\left\Vert \rho \right\Vert
_{L^{p}}\left\Vert z\right\Vert _{C_{1-\gamma }} \\
&&+\frac{1}{\Gamma (\gamma )}\left\vert \frac{d}{\left( c+d-A\right) }%
\right\vert \frac{1}{\Gamma (1-\gamma +\mu )}\left( \Delta _{q,\mu ,\gamma
}\right) ^{\frac{1}{q}}(b-a)^{\mu }\left\Vert \rho \right\Vert
_{L^{p}}\left\Vert z\right\Vert _{C_{1-\gamma }} \\
&&+\frac{(t-a)^{1-\gamma }}{\Gamma (\mu )}\left( \Lambda _{q,\mu ,\gamma
}\right) ^{\frac{1}{q}}(t-a)^{\gamma +\mu -1}\left\Vert \rho \right\Vert
_{L^{p}}\left\Vert z\right\Vert _{C_{1-\gamma }}.
\end{eqnarray*}

For any $z\in \mathbb{B}_{R},$ we obtain
\begin{align*}
{\Vert{\large \mathcal{T}}z\Vert}_{C_{1-\gamma }} &\leq \bigg(\frac{1}{%
\Gamma (\gamma )}\frac{\left(\Lambda _{q,\mu,\gamma }\right)^{\frac{1}{q}}}{%
\left( c+d-A\right)}\sum_{k=1}^{m}\frac{\lambda_{k}}{\Gamma(\mu)}%
(\tau_{k}-s)^{\gamma +\mu-1}\bigg) \\
&+\frac{1}{\Gamma (\gamma )}\left\vert \frac{d}{\left( c+d-A\right)}%
\right\vert\frac{\left(\Delta_{q,\mu,\gamma}\right)^{\frac{1}{q}}} {%
\Gamma(1-\gamma+\mu)}+\frac{\left(\Lambda_{q,\mu,\gamma}\right)^{\frac{1}{q}}%
}{\Gamma(\mu)}\big)(b-a)^{\mu}\big)\left\Vert\rho \right\Vert_{L^{p}}R.
\end{align*}

By (H3), we have $\Vert {\large \mathcal{T}}z\Vert _{C_{1-\gamma }}\leq
\mathcal{G}R\leq R,$ that is, ${\large \mathcal{T(}}\mathbb{B}_{R})\subset
\mathbb{B}_{R}.$

Step 2. We shall prove that ${\large \mathcal{T}}$ is completely continuous.%
\newline
The operator ${\large \mathcal{T}}$ is continuous. Let $\{z_{n}\}_{n\in
\mathbb{N}
}$ is a sequence such that $z_{n}\rightarrow z$ in $\mathbb{B}_{R}$. Then
for each $t\in J_{2},$ we have%
\begin{eqnarray*}
&&\left\Vert \big((\mathcal{T}z_{n})(t)-(\mathcal{T}z)(t)\big)%
(t-a)^{1-\gamma }\right\Vert \\
&=&\frac{1}{\Gamma (\gamma )}\frac{1}{\left( c+d-A\right) }\sum_{k=1}^{m}%
\frac{\lambda _{k}}{\Gamma (\mu )}\int_{a}^{\tau _{k}}(\tau _{k}-s)^{\mu
-1}\left\Vert f(s,z_{n}(s))-f(s,z(s))\right\Vert ds \\
&&+\frac{1}{\Gamma (\gamma )}\left\vert \frac{d}{\left( c+d-A\right) }%
\right\vert \int_{a}^{b}\frac{(b-s)^{-\gamma +\mu}}{\Gamma (1-\gamma +\mu )}\left\Vert f(s,z_{n}(s))-f(s,z(s))\right\Vert ds \\
&&+\frac{(t-a)^{1-\gamma }}{\Gamma (\mu )}\int_{a}^{t}(t-s)^{\mu
-1}\left\Vert f(s,z_{n}(s))-f(s,z(s))\right\Vert ds \\
&\leq &\frac{1}{\Gamma (\gamma )}\frac{1}{\left( c+d-A\right) }\sum_{k=1}^{m}%
\frac{\lambda _{k}}{\Gamma (\mu )}\int_{a}^{\tau _{k}}(\tau _{k}-s)^{\mu
-1}(s-a)^{\gamma -1}ds \\
&&\times \left\Vert f\big(\cdot ,z_{n}(\cdot )\big)-f\big(\cdot ,z(\cdot )%
\big)\right\Vert _{C_{1-\gamma }} \\
&&+\frac{1}{\Gamma (\gamma )}\left\vert \frac{d}{\left( c+d-A\right) }%
\right\vert \frac{1}{\Gamma (1-\gamma +\mu )}\int_{a}^{b}(b-s)^{-\gamma +\mu
}(s-a)^{\gamma -1}ds \\
&&\times \left\Vert f\big(\cdot ,z_{n}(\cdot )\big)-f\big(\cdot ,z(\cdot )%
\big)\right\Vert _{C_{1-\gamma }} \\
&&+\frac{(t-a)^{1-\gamma }}{\Gamma (\mu )}\int_{a}^{t}(t-s)^{\mu
-1}(s-a)^{\gamma -1}ds\left\Vert f\big(\cdot ,z_{n}(\cdot )\big)-f\big(\cdot
,z(\cdot )\big)\right\Vert _{C_{1-\gamma }}
\end{eqnarray*}
\begin{eqnarray*}
&&\left\Vert \big((\mathcal{T}z_{n})(t)-(\mathcal{T}z)(t)\big)%
(t-a)^{1-\gamma }\right\Vert \\
&\leq &\frac{1}{\left( c+d-A\right) }\frac{\mathcal{B}(\gamma ,\mu )}{\Gamma
(\mu )\Gamma (\gamma )}\sum_{k=1}^{m}\frac{\lambda _{k}(\tau _{k}-a)^{\gamma
-1+\mu }}{\Gamma (\mu )}\left\Vert f\big(\cdot ,z_{n}(\cdot )\big)-f\big(%
\cdot ,z(\cdot )\big)\right\Vert _{C_{1-\gamma }} \\
&&+\left\vert \frac{d}{\left( c+d-A\right) }\right\vert \frac{(b-a)^{\mu }}{%
\Gamma (\mu +1)}\left\Vert f\big(\cdot ,z_{n}(\cdot )\big)-f\big(\cdot
,z(\cdot )\big)\right\Vert _{C_{1-\gamma }} \\
&&+\frac{(b-a)^{\mu }}{\Gamma (\mu )}\frac{\mathcal{B}(\gamma ,\mu )}{\Gamma
(\mu )}\left\Vert f\big(\cdot ,z_{n}(\cdot )\big)-f\big(\cdot ,z(\cdot )\big)%
\right\Vert _{C_{1-\gamma }}.
\end{eqnarray*}%
By (H1) and the Lebesgue dominated convergence theorem, we have
\begin{equation*}
\Vert (\mathcal{T}z_{n}-\mathcal{T}z)\Vert _{C_{1-\gamma }}\longrightarrow
0~~as~~n\longrightarrow \infty ,
\end{equation*}%
which means that operator $\mathcal{T}$ is continuous on $\mathbb{B}_{R}$.%
\newline

Step 3. $\mathcal{T}(\mathbb{B}_{R})$ is relatively compact.\newline
From Step 1, we have ${\large \mathcal{T(}}\mathbb{B}_{R})\subset \mathbb{B}%
_{R}.$ It follows that ${\large \mathcal{T(}}\mathbb{B}_{R})$ is uniformly
bounded i.e.\ $\mathcal{T}$ maps $\mathbb{B}_{R}$ into itself. Moreover, we
show that operator $\mathcal{T}$ is equicontinuous on $\mathbb{B}_{R}$.
Indeed, for any $a<t_{1}<t_{2}<b$ and $z\in \mathbb{B}_{R}$, we get%
\begin{eqnarray*}
&&\left\Vert (t_{2}-a)^{1-\gamma }\big({\large \mathcal{T}}z\big)%
(t_{2})-(t_{1}-a)^{1-\gamma }\big({\large \mathcal{T}}z\big)%
(t_{1})\right\Vert \\
&\leq &\dfrac{1}{\Gamma (\mu )}\left\Vert (t_{2}-a)^{1-\gamma
}\int_{a}^{t_{2}}(t_{2}-s)^{\mu -1}f\big(s,z(s)\big)ds\right. \\
&&\left. -(t_{1}-a)^{1-\gamma }\int_{a}^{t_{1}}(t_{1}-s)^{\mu -1}f\big(s,z(s)%
\big)ds\right\Vert \\
&\leq &\dfrac{\Vert f\Vert _{C_{1-\gamma }}}{\Gamma (\mu )}\left\Vert
(t_{2}-a)^{1-\gamma }\int_{a}^{t_{2}}(t_{2}-s)^{\mu -1}(s-a)^{\gamma
-1}ds\right. \\
&&\left. -(t_{1}-a)^{1-\gamma }\int_{a}^{t_{1}}(t_{1}-s)^{\mu
-1}(s-a)^{\gamma -1}ds\right\Vert \\
&\leq &\Vert f\Vert _{C_{1-\gamma }}\frac{\mathcal{B}(\gamma ,\mu )}{\Gamma
(\mu )}\left\Vert (t_{2}-a)^{\mu }-(t_{1}-a)^{\mu }\right\Vert ,
\end{eqnarray*}%
which tends to zero as $t_{2}\rightarrow t_{1},$ independent of $z\in
\mathbb{B}_{R}$, where $\mathcal{B(\cdot },\mathcal{\cdot )}$ is a Beta
function. Thus we conclude that $\mathcal{T}(\mathbb{B}_{R})$ is
equicontinuous on $\mathbb{B}_{r}$ and therefore is relatively compact. As a
consequence of Steps 1 to 3 together with Arzela-Ascoli theorem, we conclude
that $\mathcal{T}:\mathbb{B}_{R}\rightarrow \mathbb{B}_{R}$ is completely
continuous operator.

Step 4: The M\"{o}nch condition is satisfied.\newline
Let $\mathcal{V}$ be a subset of$\ \mathbb{B}_{R}$ such that $\mathcal{%
V\subset }\overline{co}\left( \mathcal{T}(\mathcal{V})\cup \{0\}\right) .$ $%
\mathcal{V}$ is bounded and equicontinuous, and therefore the function $%
t\longrightarrow {\large \alpha }(\mathcal{V(}t\mathcal{)})$ is continuous
on $J_{1}.$ By (H2)-(H3), Lemma 2.6, and the properties of the measure $%
{\large \alpha },$ for each $t\in J_{2}$%
\begin{eqnarray*}
{\large \alpha }(\mathcal{V(}t\mathcal{)}) &\leq &{\large \alpha }(\mathcal{T%
}(\mathcal{V})(t)\cup \{0\})\leq {\large \alpha }(\mathcal{T}(\mathcal{V}%
)(t)) \\
&\leq &\frac{1}{\Gamma (\gamma )}\frac{(t-a)^{\gamma -1}}{\left(
c+d-A\right) }\sum_{k=1}^{m}\frac{\lambda _{k}}{\Gamma (\mu )}\int_{a}^{\tau
_{k}}(\tau _{k}-s)^{\mu -1}\rho (s){\large \alpha }(\mathcal{V}(s))ds \\
&&+\frac{1}{\Gamma (\gamma )}\left\vert \frac{d(t-a)^{\gamma -1}}{\left(
c+d-A\right) }\right\vert \frac{1}{\Gamma (1-\gamma +\mu )}%
\int_{a}^{b}(b-s)^{-\gamma +\mu }\rho (s){\large \alpha }(\mathcal{V}(s))ds
\\
&&+\frac{1}{\Gamma (\mu )}\int_{a}^{t}(t-s)^{\mu -1}\rho (s){\large \alpha }(%
\mathcal{V}(s))ds \\
&\leq &\frac{1}{\Gamma (\gamma )}\frac{(b-a)^{\gamma -1}}{\left(
c+d-A\right) }\sum_{k=1}^{m}\frac{\lambda _{k}}{\Gamma (\mu )}\left(
\int_{a}^{\tau _{k}}(\tau _{k}-s)^{(\mu -1)q}ds\right) ^{\frac{1}{q}%
}\left\Vert \rho \right\Vert _{L^{p}}m{\large \alpha }(\mathcal{V}(b)) \\
&&+\frac{1}{\Gamma (\gamma )}\left\vert \frac{d(b-a)^{\gamma -1}}{\left(
c+d-A\right) }\right\vert \frac{1}{\Gamma (1-\gamma +\mu )}\left(
\int_{a}^{b}(b-s)^{(-\gamma +\mu )q}ds\right) ^{\frac{1}{q}} \\
&&\times \left\Vert \rho \right\Vert _{L^{p}}{\large \alpha }(\mathcal{V}%
(b))+\frac{1}{\Gamma (\mu )}\left( \int_{a}^{t}(t-s)^{(\mu -1)q}ds\right) ^{%
\frac{1}{q}}\left\Vert \rho \right\Vert _{L^{p}}{\large \alpha }(\mathcal{V}%
(b)).
\end{eqnarray*}%
where we have used the fact that
\begin{equation*}
\frac{1}{q}<1\Longrightarrow \frac{1}{(\mu -1)q+1}<\frac{1}{\mu },\text{ }%
0<\mu <1,
\end{equation*}%
and%
\begin{equation*}
\frac{1}{q}<1\Longrightarrow \frac{1}{(-\gamma +\mu )q+1}<\frac{1}{(-\gamma
+\mu )},\text{ }0<\mu <\gamma <1.
\end{equation*}%
Hence%
\begin{eqnarray*}
{\large \alpha }(\mathcal{V}(t)) &\leq &\big(\frac{m}{\Gamma (\gamma )}\frac{%
(b-a)^{\gamma -1}}{\left( c+d-A\right) }\sum_{k=1}^{m}\frac{\lambda
_{k}(\tau _{k}-a)^{\mu }}{\Gamma (\mu +1)} \\
&&+\frac{1}{\Gamma (\gamma )}\left\vert \frac{d}{\left( c+d-A\right) }%
\right\vert \frac{(b-a)^{\mu }}{\Gamma (-\gamma +\mu )}+\frac{(t-a)^{\mu }}{%
\Gamma (\mu +1)}\big)\left\Vert \rho \right\Vert _{L^{p}}<1.
\end{eqnarray*}%
It follows that%
\begin{equation*}
\left\Vert {\large \alpha }(\mathcal{V})\right\Vert _{L^{\infty }}(1-L^{\ast
})\leq 0.
\end{equation*}

This means $\left\Vert {\large \alpha }(\mathcal{V})\right\Vert _{L^{\infty
}}=0,$ i.e. ${\large \alpha }(\mathcal{V}(t))=0$\ for all $t\in J_{2}.$ Thus
$\mathcal{V}(t)$ is relatively compact in $E$. In view of Arzela-Ascoli
theorem, $\mathcal{V}$ is relatively compact in $\mathbb{B}_{R}.$ An
application of Theorem 2.13 shows that $\mathcal{T}$ has a fixed point which
is a solution of HNBVP (\ref{e8.1})-(\ref{e8.2}). The proof is complete.
\end{proof}

\section{An example \label{Sec5}}

We consider the Hilfer fractional differential equation with nonlocal
boundary condition%
\begin{equation}
\begin{cases}
D_{0^{+}}^{\mu ,\nu }z(t)=f\big(t,z(t)\big),\qquad t\in (0,1],0<\mu
<1,0\leq \nu \leq 1, \\
I_{0^{+}}^{1-\gamma }\big[\frac{1}{4}z(0^{+})+\frac{3}{4}z(1^{-})\big]=\frac{%
2}{5}z(\frac{2}{3}),\qquad\mu \leq \gamma =\mu +\nu -\mu \nu ,%
\end{cases}
\label{3}
\end{equation}%
where $ f\big(t,z(t)\big)=\frac{1}{16}t\sin \left\vert z(t)\right\vert,$ $\mu=\frac{1}{3},$ $\nu=\frac{1}{4}$, $\gamma =\frac{1}{2}$, $c=\frac{1}{4}$, $d=\frac{3}{4}$, $\lambda_{1}=\frac{2}{5}$ and $\tau_{1}=\frac{2}{3}.$
Let $E=\mathbb{R}^{+}$ and $J_{2}=(0,1].$

Clearly we can see that $\sqrt{t}f\big(t,z(t)\big)=\frac{1}{16}\sqrt[3]{t}\sin{z(t)}\in C([0,1],
\mathbb{R}^{+}),$ and hence $f\big(t,z(t)\big)\in C_{\frac{1}{2}}([0,1],\mathbb{R}^{+}).$
Also, observe that, for $t\in (0,1]$ and for any $z\in C_{1-\frac{1}{2}}([0,1],\mathbb{R}^{+})$:
\begin{equation*}
\left\Vert f\big(t,z(t)\big)\right\Vert =\left\Vert \frac{1}{16}t\sin
\left\vert z(t)\right\vert \right\Vert \leq \frac{1}{16}t\left\Vert
z(t)\right\Vert .
\end{equation*}
Therefore, the conditions (H1) and (H2) is satisfied with $\rho (t)=\frac{1}{%
16}t.$ Select $p=\frac{1}{2},$ we have $\displaystyle\left\Vert p\right\Vert
_{L^{\frac{1}{2}}}=\left\Vert p\right\Vert _{L^{\frac{1}{2}}}=\left(
\int_{0}^{1}\left\vert \frac{1}{16}s\right\vert ^{\frac{1}{2}}ds\right) ^{2}=%
\frac{1}{48}$. It is easy to check that conditions in (H3) are satisfied
too. Indeed, by some simple computations with $q=\frac{1}{2}$, we get
\begin{equation*}
\Lambda _{q,\mu ,\gamma }:=\frac{\Gamma (q(\mu -1)+1)\Gamma (q(\gamma -1)+1)%
}{\Gamma (q(\mu +\gamma -2)+2)}=\frac{\Gamma (\frac{2}{3})\Gamma (\frac{3}{4}%
)}{\Gamma (\frac{17}{12})},
\end{equation*}%
\begin{equation*}
\Delta _{q,\mu ,\gamma }:=\frac{\Gamma (q(\mu -\gamma )+1)\Gamma (q(\gamma
-1)+1)}{\Gamma (q(\mu -1)+2)}=\frac{\Gamma (\frac{3}{4})\Gamma (\frac{11}{12}%
)}{\Gamma (\frac{5}{3})},
\end{equation*}%
\begin{align*}
\mathcal{G}:=&\bigg(\frac{1}{\Gamma (\gamma )}\frac{\left(
\Lambda _{q,\mu ,\gamma }\right) ^{\frac{1}{q}}}{\left(c+d-A\right) }\frac{
\lambda _{1}}{\Gamma (\mu )}(\tau _{1}-a)^{\gamma +\mu -1}\bigg)+\frac{1}{\Gamma (\gamma )}\left\vert \frac{d}{\left( c+d-A\right)}\right\vert\\
&\times \frac{\left( \Delta_{q,\mu ,\gamma }\right) ^{\frac{1}{q}}}
{\Gamma (1-\gamma +\mu )}+\frac{\left( \Lambda _{q,\mu ,\gamma }\right)^{\frac{1}{q}}}{\Gamma (\mu)}\big((b-a)^{\mu }\big)\left\Vert\rho
\right\Vert _{L^{p}}\simeq 0.03<1
\end{align*}
and
\begin{align*}
L^{\ast}:=&\big(\frac{m}{\Gamma (\gamma )}\frac{(b-a)^{\gamma -1}}{\left(
c+d-A\right) }\sum_{k=1}^{m}\frac{\lambda _{k}(\tau _{k}-s)^{\mu }}{\Gamma
(\mu +1)}+\frac{1}{\Gamma (\gamma )}\left\vert \frac{d}{\left( c+d-A\right)}\right\vert\\
&\times\frac{(b-a)^{\mu }}{\Gamma (-\gamma +\mu )}+\frac{(b-a)^{\mu }}{%
\Gamma (\mu +1)}\big)\left\Vert \rho \right\Vert _{L^{p}}\simeq 0.14<1,
\quad (m=1).
\end{align*}
An application of Theorem \ref{th8.1} implies that problem (\ref{3}) has a
solution in $C_{1-\frac{1}{2}}^{\frac{1}{2}}([0,1],%
\mathbb{R}
^{+})$.

\end{document}